\numberwithin{equation}{section}
\theoremstyle{plain}
\newtheorem{theorem}{Theorem}[section]
\newtheorem{corollary}[theorem]{Corollary}
\newtheorem{lemma}[theorem]{Lemma}
\newtheorem{Setting}[theorem]{Setting}
\theoremstyle{definition}
\newtheorem{definition}[theorem]{Definition}
\theoremstyle{remark}
\newtheorem{remark}[theorem]{Remark}
\newtheorem{example}[theorem]{Example}
\crefname{theorem}{Theorem}{Theorems}
\crefname{definition}{Definition}{Definitions}
\crefname{remark}{Remark}{Remarks}
\crefname{lemma}{Lemma}{Lemmas}
\Crefname{proposition}{Proposition}{Propositions}
\crefname{corollary}{Corollary}{Corollaries}
\newcommand{\om}{\omega}
\newcommand{\omneu}{\upsilon}
\newcommand{\de}{\theta}
\newcommand{\e}{\varepsilon}
\newcommand{\ee}{\textup{e}}
\newcommand{\Borel}{\mathfrak B(\mathbb R^d)}
\newcommand{\defeq}{\vcentcolon=}
\newcommand{\eqdef}{=\vcentcolon}
\newcommand{\nf}[2]{{#1}/{#2}}
\newcommand{\conste}{\eta_1}
\newcommand{\constz}{\eta_2}
\newcommand{\constd}{\kappa}
\newcommand{\s}{s}
\newcommand{\sset}{K} 
\newcommand{\cset}{F} 
\newcommand{\scontr}{\phi } 
\newcommand{\ccontr}{\psi}
\newcommand{\Scontr}{\Phi}
\newcommand{\Ccontr}{\Psi}
\newcommand{\sratio}{r} 
\newcommand{\edim}{d} 
\newcommand{\hdim}{{\delta}}
\newcommand{\map}{g}
\newcommand{\meas}{\mu}
\newcommand{\ABC}{\mathcal A}
\newcommand{\codes}{\Sigma} 
\newcommand{\met}{\rho} 
\newcommand{\holderexp}{\alpha}
\newcommand{\leb}{\lambda^{\edim}}
\newcommand{\Mink}{\mathcal M}
\newcommand{\arith}{a}
\newcommand{\bepsde}{b_{\e,\de}}
\newcommand{\Lip}{c} 
\newcommand{\low}{\gamma_{_l}} 
\newcommand{\upp}{\gamma_{_{u}}} 
\newcommand{\codeepsde}{\Sigma(\e,\de)}
\newcommand{\diam}{\text{diam}}
\newcommand{\ssc}{\beta}
\newcommand{\gommin}{G_{\om}}
\newcommand{\opens}{O}
\renewcommand{\tilde}{\widetilde}
\DeclareMathOperator*{\wlim}{w-lim}
\crefname{equation}{Equation}{Equations}
\begin{document}

\title[(local) Minkowski Content for a class of self-conformal sets]{Minkowski Content and local Minkowski Content for a class of self-conformal sets}
\keywords{Minkowski content \and conformal iterated function system \and self-conformal set \and fractal curvature measures}
\subjclass[2010]{MSC 28A80 \and MSC 28A75}

\author{Uta Freiberg}\address[Uta Freiberg]{Universit\"at Siegen, FB 6 - Mathematik, Walter-Flex-Str. 3, 57068 Siegen, Germany} \email{freiberg@mathematik.uni-siegen.de}   
\author{Sabrina Kombrink}\address[Sabrina Kombrink]{Universit\"at Bremen, Bibliothekstra{\ss}e 1, 28395 Bremen, Germany} \email{kombrink@math.uni-bremen.de}

\begin{abstract}
We investigate (local) Minkowski measurability of $\mathcal C^{1+\alpha}$ images of self-similar sets. We show that (local) Minkowski measurability of a self-similar set $\sset$ implies (local) Minkowski measurability of its image $\cset$ and provide an explicit formula for the (local) Minkowski content of $\cset$ in this case. A counterexample is presented which shows that the converse is not necessarily true. That is, $\cset$ can be Minkowski measurable although $\sset$ is not. However, we obtain that an average version of the (local) Minkowski content of both $\sset$ and $\cset$ always exists and also provide an explicit formula for the relation between the (local) average Minkowski contents of $\sset$ and $\cset$. 
\end{abstract}

\maketitle


\section{Introduction and statement of results}
The Minkowski content is a useful tool for describing the geometric structure of a fractal object. It can be viewed as a beneficial complement to the notion of dimension for the following reason.
It is well known that fractal sets of the same ``fractal'' dimension (such as Minkowski or Hausdorff dimension) can differ significantly in their structure. For example, consider the following two Cantor sets: Subdivide the unit interval $[0,1]$ into seven intervals of same lengths. For the first Cantor set $C_1$ keep the first, third, fifth and seventh interval from the left and repeat the same procedure with the remaining intervals. For the second Cantor set $C_2$ keep at each step the two leftmost and the two rightmost intervals. Then the Minkowski as well as the Hausdorff dimension of $C_1$ and $C_2$ are equal, although the two sets differ significantly in their gap structure. The Minkowski content is capable of detecting this structural difference, as is discussed in \cite{Mandelbrot_Buch,Mandelbrot_lacunarity}, and was proposed therein as a measure of lacunarity for fractal sets. The word lacunarity originates from \texttt{lacuna} which is Latin for gap. According to \cite{Mandelbrot_Buch} ``a fractal is to be called lacunar if its gaps tend to be large, in the sense that they include large intervals (discs, or balls).'' Thus, $C_2$ is more lacunar than $C_1$. This is also reflected by the fact that the average Minkowski content of $C_1$ is greater than that of $C_2$ (see Example \ref{ex:C1C2}).\\

Besides the geometric interpretation, results on the existence of the Minkowski content play an important role with respect to the Weyl-Berry conjecture concerning the asymptotic distribution of the eigenvalues of the Laplacian on domains with fractal boundaries. More precisely, the second term asymptotic is expressed in terms of the Minkowski dimension and the Minkowski content of the boundary of the domain (see Section 4 in \cite{Falconer_Minkowski},\cite{Lapidus_Drum,Levitin} and references given there).

Another motivation for studying the Minkowski content of fractal sets arises from noncommutative geometry. 
In Connes' seminal book \cite{Connes_seminal} the notion of a noncommutative fractal geometry is developed. There, it is shown that the natural analogue of the volume of a compact smooth Riemannian spin manifold for a fractal set in $\mathbb R$ is that of the Minkowski content. This idea is also reflected in the works \cite{FalconerSamuel,Guido_Isola,Samuel}.\\

There are various works available concerning the existence of the Minkowski content. A complete characterisation of Minkowski measurability of fractal strings has been obtained in \cite{LapvF_Springer,LapPom}.
These works, as well as \cite{Falconer_Minkowski}, lead to explicit formulae for the Minkowski content of self-similar subsets of $\mathbb R$ satisfying the open set condition (OSC). Moreover, it is shown that a self-similar subset of $\mathbb R$ which is of zero Lebesgue measure is Minkowski measurable if and only if it is nonlattice in the sense of Definition \ref{def:lattice} (see \cite{LapvF_Springer} and references within). In higher dimensions, Gatzouras \cite{Gatzouras} obtains Minkowski measurability of nonlattice self-similar sets satisfying the OSC and gains explicit formulae for their Minkowski content. 
Assuming certain conditions on the geometric structure of the underlying set, alternative formulae are obtained in \cite{Trken,LapPeaWin} for the nonlattice case. Furthermore, there it is shown that the Minkowski content does not exist in the lattice situation.
For non-Minkowski measurable sets it is worthwhile considering the average Minkowski content, which is defined to be the logarithmic Ces\`aro average (see Definition \ref{def:Minkc}) and has been proven to exist for any self-similar set satisfying the OSC in \cite{Gatzouras}. 

In \cite{Winter_thesis}, the results of \cite{Gatzouras} are generalised in that a localised version of the (average) Minkowski content is examined. This localised version of the (average) Minkowski content, which we call the local (average) Minkowski content (see Definition \ref{def:locMinkc}), is one of the (average) fractal curvature measures which are introduced in \cite{Winter_thesis} and studied for (random) self-similar sets in \cite{Winter_thesis,WinterZaehle,Zaehle_random}. The intention hehind introducing fractal curvature measures was to develop an alternative notion of curvature, since the classical notions do not seem to be appropriate for fractal sets. Moreover, the introduction of fractal curvature measures was motivated by finding geometric characteristics for fractal sets that supplement the notions of dimension.\\

In this paper we are interested in statements on the existence of the (average) Minkowski content --and its local version-- of sets which are more general than self-similar sets, namely self-conformal sets. Self-conformal sets arise as invariant sets of iterated function systems consisting of contracting conformal maps (see for example \cite{MauldinUrbanski}). Some results have already been obtained for these kind of sets. In \cite{KesKom} it is shown that the (local) average Minkowski content of a self-conformal subset of $\mathbb R$ which satisfies the OSC exists and can be calculated explicitly. Moreover, in the nonlattice case, existence and an explicit formula for the (local) Minkowski content have been obtained (we refer to \cite{KesKom} for the explanation what it means for a self-conformal set to be nonlattice and for the explicit formula for the (average) Minkowski content --and its local version).
In this present paper, we extend these examinations to higher dimensions by considering self-conformal sets which arise as images under $\mathcal{C}^{1+\alpha}$-diffeomorphisms of self-similar sets. 
To be more precise, we consider the following setting.
\begin{Setting}\label{setting}
Let $(\mathbb R^{\edim},\met)$ denote the $\edim$-dimensional Euclidean space and for a compact set $\emptyset\neq Y\subset\mathbb R^{\edim}$ and $\e>0$ define $Y_{\e}\defeq\{x\in\mathbb R^{\edim}\mid \met(Y,x)\leq\e\}$ to be the \emph{$\e$-parallel neighbourhood} of $Y$. 
Let $\Scontr\defeq\{\scontr_1,\ldots,\scontr_N\}$ denote an \emph{iterated function system} (IFS) consisting of contracting similarities $\scontr_i\colon \mathbb R^{\edim}\to \mathbb R^{\edim}$, $i\in\{1,\ldots,N\}$, where $N\geq 2$. We require the contraction ratios $\sratio_1,\ldots,\sratio_N$ of $\scontr_1,\ldots,\scontr_N$ to lie in $(0,1)$ and denote by $\sset$ the unique nonempty compact invariant set of $\Scontr$. We assume that $\Scontr$ satisfies the \emph{strong separation condition} (SSC), that is, $\scontr_i \sset\cap\scontr_j \sset=\emptyset$ for each $i\neq j\in\{1,\ldots,N\}$. Note that the SSC implies the OSC, that is, there exists a bounded open nonempty set $\opens\subset\mathbb R^{\edim}$ such that $\scontr_i\opens\subseteq\opens$ for all $i\in\{1,\ldots,N\}$ and $\scontr_i\opens\cap\scontr_j\opens=\emptyset$ for $i\neq j\in\{1,\ldots,N\}$. Associated with such an IFS is the \emph{code space} $\codes\defeq\ABC^{\mathbb N}$, where $\ABC\defeq\{1,\ldots,N\}$ denotes the \emph{alphabet} consisting of $N$ symbols. The code space gives a natural encoding of the invariant set $K$ via the \emph{code map} $\pi\colon\codes\to K$, which maps $\om_1\om_2\cdots\in\codes$ to the unique point in the intersection $\bigcap_{n\in\mathbb N} \phi_{\om_1\cdots\om_n}\sset$, where $\phi_{\om_1\cdots\om_n}\defeq\phi_{\om_1}\circ\cdots\circ\phi_{\om_n}$. 
We also require a conformal diffeomorphism $\map\colon\mathcal{U}\to\mathbb R^{\edim}$ defined on an open domain $\mathcal U$ containing the $1/2$-parallel neighbourhood $\sset_{\nf{1}{2}}$ of $\sset$, where conformal means angle preserving. 
Recall that the Jacobian $\textup{D}\map$ of a conformal map $\map$ at a point $x\in\mathcal U$ can be decomposed into an orthogonal matrix $O(x)$ and a scalar $f(x)$, namely
\[
\textup{D}\map(x)=f(x)\cdot O(x)
\]
(see for example Chapter A.3 in \cite{Lectureshyperbolic}). The \emph{length scaling ratio} of $\map$ at a point $x$ will be denoted by $|\map'(x)|\defeq|f(x)|$. We assume that $|\map'|$ is $\holderexp$-H\"older continuous with $\holderexp>0$ and set $\cset\defeq\map(\sset)$.
Then $\cset$ satisfies
\[
	\cset=\bigcup_{i=1}^N \map\scontr_i\map^{-1} (\cset).
\]
The maps $\ccontr_i\defeq\map\scontr_i\map^{-1}$ for $i\in\ABC$ are not necessarily contractions. However, the $\alpha$-H\"older continuity of $\lvert\map'\rvert$ implies that an iterate $\tilde{\Ccontr}$ of the system $\Ccontr\defeq\{\ccontr_1,\ldots,\ccontr_N\}$ consists solely of contractions. Indeed, $\tilde{\Ccontr}$ is an IFS and $\cset$ is its unique nonempty compact invariant set. Note that the IFS $\tilde{\Ccontr}$ also satisfies the SSC, since $\map$ is a diffeomorphism.
\end{Setting}
 
Crucial for the definition of the (average) Minkowski content --and its local version-- is the notion of the Minkowski dimension.
\begin{definition}[Minkowski dimension]
  For a nonempty compact set $Y\subset\mathbb R^{\edim}$ the \emph{upper} and \emph{lower Minkowski dimensions} are respectively defined to be 
  \begin{align*}
    \overline{\textup{dim}}_M(Y)\defeq\edim-\liminf_{\e\searrow 0}\frac{\ln\leb(Y_{\e})}{\ln\e}\quad\text{and}\quad
    \underline{\textup{dim}}_M(Y)\defeq\edim-\limsup_{\e\searrow 0}\frac{\ln\leb(Y_{\e})}{\ln\e}.
  \end{align*}
	Here, $\leb$ denotes the $d$-dimensional Lebesgue measure. 
  In case the upper and lower Min\-kows\-ki dimensions coincide, we call the common value the \emph{Minkowski dimension} of $Y$ and denote it by $\textup{dim}_M(Y)\eqdef\hdim$.
\end{definition}
\begin{remark}
The (upper and lower) Minkowski dimension coincides with the (upper and lower) box counting dimension (see Proposition 3.2 in \cite{Falconer_Foundation}).
\end{remark}
\begin{definition}[(Average) Minkowski content, Minkowski measurability]
  Let $Y\subset\mathbb R^d$ denote a nonempty compact set whose Minkowski dimension $\hdim$ exists.
\label{def:Minkc}
\begin{enumerate}
\item The \emph{average Minkowski content} of $Y$ is defined by
	\[
  \tilde{\Mink}(Y)\defeq\lim_{T\to 0}\lvert \ln T\rvert^{-1}\int_{T}^1 \e^{\hdim-\edim}\leb(Y_{\e})\frac{\textup{d}\e}{\e},
  \] 
	provided the limit exists. 
\item The \emph{upper} and \emph{lower Minkowski contents} of $Y$ are respectively defined to be
  \[
	\overline{\Mink}(Y)\defeq\limsup_{\e\searrow 0}\e^{\hdim-\edim}\leb(Y_{\e})\quad\text{and}\quad
	\underline{\Mink}(Y)\defeq\liminf_{\e\searrow 0}\e^{\hdim-\edim}\leb(Y_{\e}).
  \]
If $\overline{\Mink}(Y)=\underline{\Mink}(Y)$, then we call the common value the \emph{Minkowski content} of $Y$ and denote it by $\Mink(Y)$. If $\Mink(Y)$ exists, then $Y$ is said to be \emph{Minkowski measurable}.
\end{enumerate}
\end{definition}

Often, not only the global structure of a set is of interest but its local structure is too, since it contains more information on the `texture' of the set itself. This information is reflected by the local (average) Minkowski content, which gives a refinement of the (average) Minkowski content.
\begin{definition}[Local (average) Minkowski content]
Let $Y\subset\mathbb R^d$ denote a nonempty compact set whose Minkowski dimension $\hdim$ exists.
\label{def:locMinkc}
\begin{enumerate}
\item Provided the weak limit of finite Borel measures exists, we define 
\[
	\tilde{\Mink}(Y,\cdot)\defeq\wlim_{T\to 0}\lvert\ln T\rvert^{-1}\int_T^1\e^{\hdim-\edim-1}\leb(Y_{\e}\cap\cdot)\textup{d}\e
\]
to be the \emph{local average Minkowski content} of $Y$.
\item  The \emph{local Minkowski content} $\Mink(Y,\cdot)$ is defined, whenever this weak limit exists, to be the weak limit of finite Borel measures 
\[
	\Mink(Y,\cdot)\defeq\wlim_{\e\to 0}\e^{\hdim-\edim}\leb(Y_{\e}\cap\cdot).
\]
\end{enumerate}
\end{definition}

It is well-known that the Minkowski dimension of self-similar sets satisfying the SSC exists, that it is equal to the Hausdorff dimension and that it is given by the unique solution $s$ of the equation $\sum_{i=1}^N \sratio_i^{s}=1$ (see Theorem 9.3 in \cite{Falconer_Foundation}). Moreover, $\cset$ has the same Minkowski dimension as $\sset$, since $\map$ is a bi-Lipschitz map (see Corollary 2.4 together with Theorem 9.3 in \cite{Falconer_Foundation}).
Thus, the terms from Definitions \ref{def:Minkc} and \ref{def:locMinkc} are defined for such sets. This allows us to formulate our results.
\begin{theorem}[(Average) Minkowski content]
  With the notation of Setting \ref{setting}, let $\hdim$ denote the Minkowski dimension of $\sset$ (and hence $\cset$). Let $\mu_{\hdim}$ denote the normalised $\hdim$-dimensional Hausdorff measure on $\sset$, that is, $\mu_{\hdim}=\mathcal{H}^{\hdim}\vert_{\sset}/\mathcal{H}^{\hdim}(\sset)$, where $\mathcal H^{\hdim}$ denotes the $\hdim$-dimensional Hausdorff measure. Then the following hold.
\label{thm:Minkcresult}
\begin{enumerate}
  \item The average Minkowski contents of $\sset$ and $\cset$ always exist and are positive and finite. Moreover, they satisfy the relation 
  \[
  \tilde{\Mink}(\cset)=\tilde{\Mink}(\sset)\cdot\int_{\sset}\lvert\map'\rvert^{\hdim}\textup{d}\mu_{\hdim}.
  \]
  \item\label{it:Minkcontent} $\cset$ is Minkowski measurable if $\sset$ is Minkowski measurable. In this case we have $\Mink(\sset)=\tilde{\Mink}(\sset)$ and $\Mink(\cset)=\tilde{\Mink}(\cset)$.
\end{enumerate}
\end{theorem}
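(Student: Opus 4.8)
The plan is to compare the parallel volumes of $\cset$ and $\sset$ scale by scale, using that on a small cylinder the conformal map $\map$ is close to a similarity with ratio $\lvert\map'\rvert$. Throughout I write $G(\e)\defeq\e^{\hdim-\edim}\leb(\sset_\e)$ and $G_{\cset}(\e)\defeq\e^{\hdim-\edim}\leb(\cset_\e)$, and denote by $\langle\,\cdot\,\rangle$ the logarithmic Ces\`aro average $\lim_{T\to0}\lvert\log T\rvert^{-1}\int_T^1(\,\cdot\,)\,\e^{-1}\textup{d}\e$, with $\overline{\langle\,\cdot\,\rangle}$, $\underline{\langle\,\cdot\,\rangle}$ its upper and lower versions; thus $\tilde{\Mink}(\sset)=\langle G\rangle$ and $\tilde{\Mink}(\cset)=\langle G_{\cset}\rangle$ whenever these exist. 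By the result of Gatzouras (valid under the OSC, hence under the SSC) the average $\langle G\rangle=\tilde{\Mink}(\sset)$ exists in $(0,\infty)$, and $G$ is bounded on $(0,1]$ since the upper Minkowski content of $\sset$ is finite. I will also use that the normalised Hausdorff measure is the self-similar measure with $\meas_{\hdim}(\scontr_\omega\sset)=\sratio_\omega^{\hdim}$, where $\sratio_\omega\defeq\sratio_{\omega_1}\cdots\sratio_{\omega_n}$, and that $\lvert\map'\rvert$ is bounded away from $0$ and $\infty$ on the compact set $\sset$, so $\int_{\sset}\lvert\map'\rvert^{\hdim}\textup{d}\meas_{\hdim}\in(0,\infty)$.

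For a parameter $\de\in(0,1)$ I fix the finite antichain $\codes(\de)\defeq\{\omega=\omega_1\cdots\omega_n\mid\sratio_\omega\leq\de<\sratio_{\omega_1\cdots\omega_{n-1}}\}$, whose cylinders $\{\scontr_\omega\sset\}_{\omega\in\codes(\de)}$ partition $\sset$ with diameters of order $\de$. First I record the exact decomposition $\cset_\e=\bigcup_{\omega\in\codes(\de)}(\map\scontr_\omega\sset)_\e$ (parallel sets commute with finite unions). Since the pieces $\map\scontr_\omega\sset$ are pairwise separated by a positive distance depending only on $\de$ (SSC together with the bi-Lipschitz property of $\map$), these neighbourhoods are pairwise disjoint once $\e$ is small enough relative to $\de$, whence $\leb(\cset_\e)=\sum_{\omega\in\codes(\de)}\leb((\map\scontr_\omega\sset)_\e)$.

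The key geometric step is a conformal approximation on each cylinder. Fixing $x_\omega\in\scontr_\omega\sset$ and writing $c_\omega\defeq\lvert\map'(x_\omega)\rvert$, conformality gives $\lvert\det\textup{D}\map\rvert=\lvert\map'\rvert^{\edim}$, while the $\holderexp$-H\"older continuity of $\lvert\map'\rvert$ yields $\lvert\map'(x)\rvert^{\edim}\in c_\omega^{\edim}(1\pm C\de^{\holderexp})$ for $x$ in the relevant neighbourhood of the cylinder (which lies in $\mathcal U$ for small $\e$). Controlling the local distortion of $\map$ in the same way produces the inclusions
\[
\map\bigl((\scontr_\omega\sset)_{(\e/c_\omega)(1-C\de^{\holderexp})}\bigr)\subseteq(\map\scontr_\omega\sset)_\e\subseteq\map\bigl((\scontr_\omega\sset)_{(\e/c_\omega)(1+C\de^{\holderexp})}\bigr),
\]
and combining these with the change-of-variables formula gives
\[
\leb\bigl((\map\scontr_\omega\sset)_\e\bigr)=c_\omega^{\edim}\bigl(1+O(\de^{\holderexp})\bigr)\,\leb\bigl((\scontr_\omega\sset)_{(\e/c_\omega)(1+O(\de^{\holderexp}))}\bigr).
\]
Since $\scontr_\omega$ is an exact similarity, $(\scontr_\omega\sset)_\eta=\scontr_\omega(\sset_{\eta/\sratio_\omega})$ and $\leb(\scontr_\omega(\,\cdot\,))=\sratio_\omega^{\edim}\leb(\,\cdot\,)$, so after multiplying by $\e^{\hdim-\edim}$ each summand becomes $(c_\omega\sratio_\omega)^{\hdim}(1+O(\de^{\holderexp}))\,G\bigl((\e/(c_\omega\sratio_\omega))(1+O(\de^{\holderexp}))\bigr)$. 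I expect this conformal approximation --- in particular upgrading the $\mathcal C^{1+\holderexp}$ control to the two-sided inclusion above uniformly in $\e$ --- to be the main obstacle; everything else is bookkeeping.

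It then remains to average. For fixed $\de$ the sum over $\codes(\de)$ is finite, so $\langle\,\cdot\,\rangle$ passes through it; and since this average is invariant under rescaling of the argument, $\langle G(a\,\cdot\,)\rangle=\langle G\rangle=\tilde{\Mink}(\sset)$ for every fixed $a>0$ (boundedness of $G$ makes the shift of the logarithmic window negligible), which absorbs both the factor $1/(c_\omega\sratio_\omega)$ and the radius distortion $1+O(\de^{\holderexp})$. This yields, for every $\de$,
\[
(1-C\de^{\holderexp})\,\tilde{\Mink}(\sset)\!\!\sum_{\omega\in\codes(\de)}\!\!(c_\omega\sratio_\omega)^{\hdim}\le\underline{\langle G_{\cset}\rangle}\le\overline{\langle G_{\cset}\rangle}\le(1+C\de^{\holderexp})\,\tilde{\Mink}(\sset)\!\!\sum_{\omega\in\codes(\de)}\!\!(c_\omega\sratio_\omega)^{\hdim}.
\]
Because $\sum_{\omega\in\codes(\de)}(c_\omega\sratio_\omega)^{\hdim}=\sum_{\omega\in\codes(\de)}\lvert\map'(x_\omega)\rvert^{\hdim}\meas_{\hdim}(\scontr_\omega\sset)$ is a Riemann sum for $\int_{\sset}\lvert\map'\rvert^{\hdim}\textup{d}\meas_{\hdim}$ with mesh tending to $0$, letting $\de\to0$ squeezes the two sides together and proves part (i): $\tilde{\Mink}(\cset)$ exists, is positive and finite, and equals $\tilde{\Mink}(\sset)\int_{\sset}\lvert\map'\rvert^{\hdim}\textup{d}\meas_{\hdim}$. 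For part (ii), if $\sset$ is Minkowski measurable then $\lim_{\e\to0}G(\e)=\Mink(\sset)=\tilde{\Mink}(\sset)$; running the identical inequalities with $\limsup_{\e\to0}G_{\cset}$ and $\liminf_{\e\to0}G_{\cset}$ in place of the Ces\`aro averages --- now each $G\bigl((\e/(c_\omega\sratio_\omega))(1+O(\de^{\holderexp}))\bigr)\to\Mink(\sset)$ as $\e\to0$ --- and then letting $\de\to0$ shows that $\Mink(\cset)$ exists and equals $\Mink(\sset)\int_{\sset}\lvert\map'\rvert^{\hdim}\textup{d}\meas_{\hdim}=\tilde{\Mink}(\cset)$.
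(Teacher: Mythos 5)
Your argument is correct in substance, but it reaches Theorem \ref{thm:Minkcresult} by a genuinely different and more elementary route than the paper. The paper does not prove this theorem directly: it deduces it in one line from the local Theorem \ref{thm:locMinkcresult} (evaluate the measures at $\mathbb R^{\edim}$ and use Winter's identity $\tilde{\Mink}(\sset,\cdot)=\tilde{\Mink}(\sset)\mu_{\hdim}$), and the proof of the local theorem is where all the work happens --- Portmanteau and Prohorov arguments, an intersection-stable generator of the Borel $\sigma$-algebra, Winter's estimate on inner parallel neighbourhoods, and, in the lattice case, the periodic limit function coming from Gatzouras' renewal-theoretic analysis, with lattice and nonlattice treated separately. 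Your front end coincides with the paper's: your antichain $\codes(\de)$ is the analogue of the word space $\codeepsde$, your disjointness claim is Lemma \ref{lem:disj}, and your two-sided inclusion plus change of variables is exactly the content of the bounded distortion lemma (Lemma \ref{lem:bdp}) together with the inclusions $\map\bigl((\scontr_{\om}\sset)_{\e/(\gommin(\e)(1+\de))}\bigr)\subseteq(\map\scontr_{\om}\sset)_{\e}\subseteq\map\bigl((\scontr_{\om}\sset)_{\e/\gommin(\e)}\bigr)$ used in the proof of Theorem \ref{thm:locMinkcresult}(\ref{locexistence}). The decisive divergence is your back end: the observation that the logarithmic Ces\`aro average is invariant under a fixed rescaling of the argument, $\langle G(a\,\cdot)\rangle=\langle G\rangle$ for each fixed $a>0$ (valid since $G$ is bounded), absorbs the factors $c_{\om}\sratio_{\om}(1+O(\de^{\holderexp}))$ and renders the lattice/nonlattice dichotomy and the renewal theorem unnecessary. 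What you lose is the local statement and hence the fractal-curvature-measure refinement; what you gain is a short, self-contained proof of the global theorem in which part (ii) follows from global Minkowski measurability of $\sset$ alone, rather than from existence of the local content. Two points deserve the care you already flag: first, the upper inclusion needs a bootstrap --- use the crude bound $\lvert(\map^{-1})'\rvert\leq\low^{-1}$ to place the preimage of a point of $(\map\scontr_{\om}\sset)_{\e}$ in $(\scontr_{\om}\sset)_{\e/\low}$, and only then apply the distortion estimate there to sharpen the radius; this is precisely the role of $\gommin(\e)$ in the paper. Second, disjointness of the $\e$-neighbourhoods holds only for $\e\leq\e_0(\de)$, so the range $\e\in[\e_0(\de),1]$ must be discarded as an $O(1/\lvert\ln T\rvert)$ contribution to the average (and handled by taking $\e$ small in part (ii)). Both are routine, so I see no genuine gap.
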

\begin{theorem}[Local (average) Minkowski content]
With the notation of Setting \ref{setting}, let $\hdim$ denote the Minkowski dimension of $\sset$ (and hence $\cset$) and define $\mu_{\hdim}$ as in Theorem \ref{thm:Minkcresult}. Then the following hold.
\label{thm:locMinkcresult}
\begin{enumerate}
\item\label{locaverage} The local average Minkowski contents of $\sset$ and $\cset$ always exist. Moreover, $\tilde{\Mink}(\cset,\cdot)$ is absolutely continuous with respect to the push forward measure $\map_{\star}\tilde{\Mink}(\sset,\cdot)$ and their Radon-Nikodym derivative is
  \[
  \frac{\textup{d}\tilde{\Mink}(\cset,\cdot)}{\textup{d}\left(\map_{\star}\tilde{\Mink}(\sset,\cdot)\right)}=\lvert\map'\circ\map^{-1}\rvert^{\hdim}.
  \]
\item\label{locexistence} If the local Minkowski content of $\sset$ exists, then the local Minkowski content of $\cset$ exists. Moreover, $\Mink(\sset,\cdot)=\tilde{\Mink}(\sset,\cdot)$ and $\Mink(\cset,\cdot)=\tilde{\Mink}(\cset,\cdot)$.
\end{enumerate} 
\end{theorem}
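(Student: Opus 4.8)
The plan is to establish both parts by testing the defining families of measures against an arbitrary $h\in C(\mathbb R^{\edim})$ and transporting the computation from $\cset$ to $\sset$ through $\map$. For self-similar sets satisfying the OSC the local average Minkowski content is known to exist and to be a constant multiple of the normalised Hausdorff measure, namely $\tilde{\Mink}(\sset,\cdot)=\tilde{\Mink}(\sset)\,\mu_{\hdim}$ (this is the self-similar case of \cite{Winter_thesis,WinterZaehle}), so I may take the existence and form of $\tilde{\Mink}(\sset,\cdot)$ as given. Since $\map_{\star}\bigl(\lvert\map'\rvert^{\hdim}\tilde{\Mink}(\sset,\cdot)\bigr)$ is absolutely continuous with respect to $\map_{\star}\tilde{\Mink}(\sset,\cdot)$ with Radon--Nikodym derivative $\lvert\map'\circ\map^{-1}\rvert^{\hdim}$, statement (\ref{locaverage}) is equivalent to the single transfer identity
\[
\int h\,\textup{d}\tilde{\Mink}(\cset,\cdot)=\int h(\map(x))\,\lvert\map'(x)\rvert^{\hdim}\,\textup{d}\tilde{\Mink}(\sset,x)\qquad\text{for all }h\in C(\mathbb R^{\edim}),
\]
and the whole theorem reduces to proving this, with (\ref{locexistence}) following by running the same argument with the genuine limit $\e\to0$ in place of the logarithmic average.

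First I would fix small $\e>0$ and substitute $y=\map(x)$ in the inner integral on the $\cset$-side; this is licit because $\cset_{\e}\subseteq\map(\mathcal U)$ for $\e$ small (as $\map$ is open and $\cset$ lies in the interior of $\map(\sset_{\nf{1}{2}})$), and because $\map$ is conformal we have $\lvert\det\textup{D}\map\rvert=\lvert\map'\rvert^{\edim}$, whence $\int_{\cset_{\e}}h\,\textup{d}\leb=\int_{\map^{-1}(\cset_{\e})}h(\map(x))\,\lvert\map'(x)\rvert^{\edim}\,\textup{d}x$. The pulled-back set $\map^{-1}(\cset_{\e})$ is not a uniform parallel neighbourhood of $\sset$, but conformality together with the $\holderexp$-H\"older continuity of $\lvert\map'\rvert$ makes it comparable, near a point $x$, to the $\e/\lvert\map'(x)\rvert$-neighbourhood of $\sset$. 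To convert this into controlled estimates I would decompose $\sset$ along the cylinders $\scontr_{\om}\sset$ at the stopping level $\{\om:\sratio_{\om}\le\zeta<\sratio_{\om^{-}}\}$, where $\zeta>0$ is a threshold and $\om^{-}$ denotes $\om$ with its last letter deleted. Bounded distortion, a consequence of the $\holderexp$-H\"older continuity, makes $\lvert\map'\rvert$ and $h\circ\map$ constant on each such cylinder up to a factor tending to $1$ as $\zeta\to0$, and the strong separation condition ensures that for $\e$ small relative to the gaps between level-$\om$ cylinders the contributions of $\map^{-1}(\cset_{\e})$ coming from distinct cylinders are disjoint.

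On the cylinder $\scontr_{\om}\sset$ I would freeze $\lvert\map'\rvert$ and $h\circ\map$ at a reference point $x_{\om}$, apply the self-similar scaling $x=\scontr_{\om}(u)$ (an isometry composed with a homothety of ratio $\sratio_{\om}$, so $\textup{d}x=\sratio_{\om}^{\edim}\,\textup{d}u$) to rewrite the relevant piece of $\map^{-1}(\cset_{\e})$ as the parallel neighbourhood $\sset_{\delta_{\om}}$ with $\delta_{\om}\defeq\e/(\sratio_{\om}\lvert\map'(x_{\om})\rvert)$, and use the elementary identity $\e^{\hdim-\edim}(\sratio_{\om}\lvert\map'(x_{\om})\rvert)^{\edim}=(\sratio_{\om}\lvert\map'(x_{\om})\rvert)^{\hdim}\delta_{\om}^{\hdim-\edim}$. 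Together with $\sratio_{\om}^{\hdim}=\mu_{\hdim}(\scontr_{\om}\sset)$ this turns the rescaled parallel volume of $\cset$ into
\[
\e^{\hdim-\edim}\!\int_{\cset_{\e}}\!h\,\textup{d}\leb\approx\sum_{\om}\mu_{\hdim}(\scontr_{\om}\sset)\,h(\map(x_{\om}))\,\lvert\map'(x_{\om})\rvert^{\hdim}\,\bigl[\delta_{\om}^{\hdim-\edim}\leb(\sset_{\delta_{\om}})\bigr],
\]
which is a Riemann sum for $\int h(\map(x))\lvert\map'(x)\rvert^{\hdim}\,\textup{d}\mu_{\hdim}(x)$ with the oscillating bracket as the only obstruction to convergence. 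Applying the logarithmic average $\lvert\ln T\rvert^{-1}\int_{T}^{1}(\cdot)\,\textup{d}\e/\e$ and substituting $\e\mapsto\delta_{\om}$ in each of the finitely many summands, the bracket is averaged over a range with vanishing lower endpoint and converges to $\tilde{\Mink}(\sset)$ by \cite{Gatzouras}; letting finally $\zeta\to0$, so that the distortion and boundary-overlap errors disappear, yields $\tilde{\Mink}(\sset)\int h(\map(x))\lvert\map'(x)\rvert^{\hdim}\,\textup{d}\mu_{\hdim}(x)=\int h(\map(x))\lvert\map'(x)\rvert^{\hdim}\,\textup{d}\tilde{\Mink}(\sset,x)$, i.e.\ the transfer identity. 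Since continuous functions determine finite Borel measures, this proves existence of $\tilde{\Mink}(\cset,\cdot)$ and statement (\ref{locaverage}).

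For statement (\ref{locexistence}) the same comparisons apply with the genuine limit $\e\to0$: if $\Mink(\sset,\cdot)$ exists then $\delta^{\hdim-\edim}\leb(\sset_{\delta})$ converges as $\delta\to0$ when tested against continuous functions, so no averaging is required, the Riemann sums converge directly, and $\Mink(\cset,\cdot)$ exists and satisfies the transfer identity; because a convergent net agrees with its logarithmic Ces\`aro average we also get $\Mink(\sset,\cdot)=\tilde{\Mink}(\sset,\cdot)$ and hence $\Mink(\cset,\cdot)=\tilde{\Mink}(\cset,\cdot)$. The main obstacle I anticipate is making the symbol ``$\approx$'' rigorous uniformly in $\e$: controlling the distortion error from freezing $\lvert\map'\rvert$ and $h\circ\map$ on each cylinder, the discrepancy between $\map^{-1}(\cset_{\e})$ and the piecewise uniform neighbourhoods, and the overlaps near cylinder boundaries, and showing that all of these vanish after the logarithmic average and then the limit $\zeta\to0$. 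Uniform boundedness of the approximating measures, which I would deduce from the finiteness of $\tilde{\Mink}(\cset)$ in Theorem \ref{thm:Minkcresult}, then reduces the verification of weak convergence to a countable determining family of test functions.
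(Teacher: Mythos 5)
Your proposal is sound, and for part (\ref{locaverage}) it takes a genuinely different and leaner route than the paper. The shared core is the same in both arguments: decompose along cylinders $\scontr_{\om}\sset$ at a stopping level, use the $\holderexp$-H\"older continuity of $\lvert\map'\rvert$ to get bounded distortion and freeze the scale factor on each cylinder, and sandwich $(\map\scontr_{\om}\sset)_{\e}$ between $\map$-images of uniform parallel neighbourhoods of $\scontr_{\om}\sset$. The divergence is in the choice of test objects and in how the lattice case is handled. The paper proves (\ref{locexistence}) via the Portmanteau theorem on closed sets, and proves (\ref{locaverage}) with considerably heavier machinery: Prohorov's theorem, the intersection-stable generator $\{\map\scontr_{\om}\opens\}\cup\mathcal{K}_{\cset}$ of Lemma~\ref{lem:intersectiongenerator}, Winter's inner-parallel-neighbourhood estimate (Lemma~\ref{lem:inner}) to control how $(\scontr_{\om}\sset)_{r}$ protrudes from $\scontr_{\nu}\opens$, and the renewal-theoretic periodic limit function of Gatzouras, with an explicit lattice/nonlattice case split. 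You instead test against continuous $h$, which converts all boundary and overlap issues into oscillation estimates (this is what lets you dispense with Lemma~\ref{lem:inner} and the generator), and you dispose of the oscillating bracket $\delta_{\om}^{\hdim-\edim}\leb(\sset_{\delta_{\om}})$ with $\delta_{\om}=\e/c_{\om}$, $c_{\om}$ constant, purely by the multiplicative invariance of the logarithmic Ces\`aro average: substituting $u=\e/c_{\om}$ only shifts the integration window by $\lvert\ln c_{\om}\rvert$, so the average still converges to $\tilde{\Mink}(\sset)$ regardless of whether $\Scontr$ is lattice. This eliminates renewal theory and the dichotomy entirely; the price is the double limit ($T\to0$ at fixed stopping level $\zeta$, then $\zeta\to0$), whose error control requires $\sup_{0<\e\leq1}\e^{\hdim-\edim}\leb(\sset_{\e})<\infty$. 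One small correction: that uniform bound comes from the finiteness of the \emph{upper} Minkowski content of $\sset$ (Gatzouras) transported through the sandwich estimates, not from the finiteness of $\tilde{\Mink}(\cset)$ as you suggest at the end --- finiteness of a logarithmic average does not by itself bound the underlying family. With that adjustment, and with Winter's identification $\tilde{\Mink}(\sset,\cdot)=\tilde{\Mink}(\sset)\mu_{\hdim}$ taken as input exactly as the paper does, your argument closes; your treatment of part (\ref{locexistence}) then runs parallel to the paper's, with continuous test functions in place of Portmanteau.
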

For subsets of $\mathbb R$ it was shown in \cite{KesKom} that the converse of Theorem \ref{thm:locMinkcresult}(\ref{locexistence}) also holds. Hence the local Minkowski content of $\cset$ exists if and only if the local Minkowski content of $\sset$ exists. However, it is important to remark that the converse of Theorem \ref{thm:Minkcresult}(\ref{it:Minkcontent}) is not true in general. 
To illustrate this, we present the following example, which is originally given as Example 2.15(iii) in \cite{KesKom}.
\begin{example}[Kesseb\"ohmer/Kombrink]
  Let $\sset\subset\mathbb R$ denote the middle third Cantor set and let $\mu$ denote the normalised $(\ln2/\ln3)$-dimensional Hausdorff measure on $\sset$. Let $f\colon\mathbb R\to\mathbb R$ denote the Devil's staircase function given by $f(r)\defeq\mu((-\infty,r])$, define the function $\map\colon\mathbb R\to\mathbb R$ by $\map(x)\defeq\int_{-\infty}^{x}(f(y)+1)^{-\ln3/\ln2}\textup{d}y$ and set $\cset\defeq\map(\sset)$. Then $\cset$ is Minkowski measurable although $\sset$ is not.
\end{example}

Next, we present some results from \cite{Gatzouras,Winter_thesis}, which in tandem with Theorems \ref{thm:Minkcresult} and \ref{thm:locMinkcresult} allow us to deduce explicit formulae for the (average) Minkowski content --and its local version-- for $\mathcal{C}^{1+\alpha}$-diffeomorphic images of self-similar sets.
For the statement of these theorems, we require the following definition.
\begin{definition}[(Non)lattice, scaling function]
  Fix the notation of Setting \ref{setting}. The iterated function system $\Scontr$ is said to be \emph{lattice} if there exists an $\arith>0$ such that $\ln\sratio_i\in\arith\mathbb Z$ for all $i\in\ABC$. If $\arith>0$ is maximal with this property, then $\Scontr$ is called \emph{$\arith$-lattice}. If, on the other hand, no such $\arith>0$ exists, then $\Scontr$ is called \emph{nonlattice}. We use the terms lattice and nonlattice also for the invariant set $\sset$, if the associated IFS $\Scontr$ is lattice or nonlattice respectively.
	Furthermore, the \emph{scaling function} $R_{\edim}(K,\cdot)\colon(0,\infty)\to\mathbb R$ of $\sset$ is defined by setting
\label{def:lattice}
\begin{equation*}
  R_{\edim}(\sset,\e)\defeq\leb(\sset_{\e})-\sum_{i=1}^{N}\mathds 1_{(0,\sratio_i]}(\e)\leb((\scontr_i\sset)_{\e}).
\end{equation*}
\end{definition}
We remark that for $\e>0$ small enough, $R_{\edim}(\sset,\e)$ is equal to $-\leb(\bigcup_{i\neq j\in\ABC}(\scontr_i\sset)_{\e}\cap(\scontr_j\sset)_{\e})$ and thus describes the volume of the overlap of sets of the form $(\scontr_i\sset)_{\e}$ for $i\in\ABC$. (This follows from an inclusion-exclusion argument.)

\begin{theorem}[Gatzouras]
Assume that the conditions of Setting \ref{setting} are satisfied. Let $\hdim$ denote the Minkowski dimension of the self-similar set $\sset$. Then the following hold.
\label{thm:Gatzouras}
\begin{enumerate}
  \item\label{ss:averageMink} The average Minkowski content $\tilde{\Mink}(\sset)$ of $\sset$ exists, is positive and given by
    \begin{equation*}
      \tilde{\Mink}(\sset)=-\left(\sum_{i=1}^N\sratio_i^{\hdim}\ln\sratio_i\right)^{-1}\int_0^1\e^{\hdim-\edim-1}R_{\edim}(\sset,\e)\textup{d}\e.
    \end{equation*}
  \item\label{ss:nonlatticeMink} If $\Scontr$ is nonlattice, then the Minkowski content of $\sset$ exists and coincides with the average Minkowski content, that is $\Mink(\sset)=\tilde{\Mink}(\sset)$.
\end{enumerate}
\end{theorem}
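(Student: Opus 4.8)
The plan is to realise the parallel-volume function $V(\e)\defeq\leb(\sset_{\e})$ as the solution of a renewal equation and then to read off both assertions from the Renewal Theorem, in the spirit of Gatzouras \cite{Gatzouras}. The starting point is self-similarity: because $\scontr_i$ is a similarity with ratio $\sratio_i$, one has $(\scontr_i\sset)_{\e}=\scontr_i(\sset_{\e/\sratio_i})$ and therefore $\leb((\scontr_i\sset)_{\e})=\sratio_i^{\edim}V(\e/\sratio_i)$. Inserting this into the definition of the scaling function $R_{\edim}(\sset,\cdot)$ and solving for $V(\e)$ gives the recursion
\[
V(\e)=\sum_{i=1}^{N}\mathds 1_{(0,\sratio_i]}(\e)\,\sratio_i^{\edim}\,V(\e/\sratio_i)+R_{\edim}(\sset,\e),
\]
which expresses $V$ at scale $\e$ through its values at the larger scales $\e/\sratio_i\leq1$.

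Next I would pass to the logarithmic variable $t\defeq-\ln\e$ and set
\[
z(t)\defeq\e^{\hdim-\edim}V(\e)=\ee^{t(\edim-\hdim)}V(\ee^{-t}),\qquad \eta(t)\defeq\ee^{t(\edim-\hdim)}R_{\edim}(\sset,\ee^{-t}).
\]
Using $\e/\sratio_i=\ee^{-(t+\ln\sratio_i)}$ and collecting exponents --- the factor multiplying $z(t+\ln\sratio_i)$ collapses to $\sratio_i^{\hdim}$ --- the recursion becomes
\[
z(t)=\sum_{i=1}^{N}\mathds 1_{[-\ln\sratio_i,\infty)}(t)\,\sratio_i^{\hdim}\,z(t+\ln\sratio_i)+\eta(t).
\]
This is a renewal equation driven by the measure placing mass $\sratio_i^{\hdim}$ at the point $-\ln\sratio_i>0$; it is a probability measure because $\hdim$ solves $\sum_{i=1}^{N}\sratio_i^{\hdim}=1$, and its (finite) mean is $m\defeq-\sum_{i=1}^{N}\sratio_i^{\hdim}\ln\sratio_i>0$.

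Before invoking the Renewal Theorem I would verify its hypotheses for the forcing term $\eta$. The SSC forces the sets $\scontr_i\sset$ to be separated, so their $\e$-parallel neighbourhoods are pairwise disjoint once $\e$ is small; by the inclusion--exclusion description of $R_{\edim}(\sset,\cdot)$ recorded after \cref{def:lattice} this means $R_{\edim}(\sset,\e)=0$ for all small $\e$, i.e.\ $\eta(t)=0$ for all large $t$. Since $\e\mapsto\leb(\sset_{\e})$ is continuous and the only jumps of $R_{\edim}(\sset,\cdot)$ sit at the finitely many points $\e=\sratio_i$, the function $\eta$ is bounded, compactly supported and continuous off a finite set, hence directly Riemann integrable. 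I would also invoke the classical fact that $0<\underline{\Mink}(\sset)\leq\overline{\Mink}(\sset)<\infty$ for self-similar sets obeying the OSC (cf.\ \cite{Falconer_Foundation}); this makes $z$ bounded, which on the one hand identifies $z$ with the unique bounded solution of the renewal equation, and on the other hand guarantees positivity of the limits computed below.

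The two parts then follow from the two forms of the Renewal Theorem. If $\Scontr$ is nonlattice in the sense of \cref{def:lattice}, the driving measure is non-arithmetic and the Key Renewal Theorem yields $\lim_{t\to\infty}z(t)=m^{-1}\int_{0}^{\infty}\eta(u)\,\textup{d}u$; since $\lim_{t\to\infty}z(t)=\lim_{\e\searrow0}\e^{\hdim-\edim}\leb(\sset_{\e})$, this establishes the existence of $\Mink(\sset)$, and the substitution $\e=\ee^{-u}$ turns the right-hand side into the asserted formula, proving (ii). For (i), the Ces\`aro-averaged form of the Renewal Theorem --- valid whether or not the measure is arithmetic --- gives $\lim_{U\to\infty}U^{-1}\int_{0}^{U}z(t)\,\textup{d}t=m^{-1}\int_{0}^{\infty}\eta(u)\,\textup{d}u$, and the change of variables $\e=\ee^{-t}$ identifies the left-hand side with $\tilde{\Mink}(\sset)$, yielding existence, positivity and the explicit value of the average Minkowski content. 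I expect the genuine work to lie not in the algebra but in the two verification points: establishing direct Riemann integrability of $\eta$ (equivalently, integrability and decay of $R_{\edim}(\sset,\cdot)$ together with the a priori boundedness of $z$), and, in the lattice case, passing from the generally nonexistent pointwise limit to the Ces\`aro average while checking that the averaged value does not see the arithmetic structure.
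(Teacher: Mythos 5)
The paper does not prove this theorem itself---it is quoted from Theorem 2.3 of \cite{Gatzouras}, in the form of Theorem 2.3.10 of \cite{Winter_thesis}---so there is no in-paper argument to compare against; your renewal-equation derivation is correct and is essentially the proof given in those cited sources. Under the SSC assumed in Setting \ref{setting} the forcing term $R_{\edim}(\sset,\cdot)$ does vanish for all small $\e$, so the direct Riemann integrability check is exactly as easy as you indicate, and the lattice/nonlattice dichotomy is handled correctly by the two forms of the Renewal Theorem.
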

The above theorem was originally given in Theorem 2.3 in \cite{Gatzouras} but is presented in the form of Theorem 2.3.10 of \cite{Winter_thesis}.

\begin{remark}
 The factor $-\sum_{i=1}^N\sratio_i^{\hdim}\ln\sratio_i$ multiplied with $\hdim$ coincides with the measure theoretical entropy of the shift-map with respect to the unique shift-invariant Gibbs measure $\mu_{-\hdim\xi}$ for the potential function $-\hdim\xi$. Here the \emph{geometric potential function} $\xi\colon\codes\to\mathbb R$ is defined by $\xi(\om)\defeq-\ln\sratio_{\om_1}$ for $\om=\om_1\om_2\cdots\in\codes$. The quantity $-\hdim\sum_{i=1}^N\sratio_i^{\hdim}\ln\sratio_i$ is also known as the entropy of the probability distribution $(\sratio_1^{\hdim},\ldots,\sratio_N^{\hdim})$. For further explanation of these terms see \cite{Bowen_equilibrium}.
\end{remark}
\begin{theorem}[Winter]
Assume that the conditions of Setting \ref{setting} hold. Denote by $\hdim$ the Minkowski dimension of the self-similar set $\sset$ and let $\mu_{\hdim}$ denote the normalised $\hdim$-dimensional Hausdorff measure on $\sset$. Then the following hold.
\label{thm:Winter}
\begin{enumerate}
  \item\label{ss:localaverage} The local average Minkowski content of $\sset$ exists and is given by
    \begin{equation*}
    \tilde{\Mink}(\sset,\cdot)=\tilde{\Mink}(\sset)\mu_{\hdim}(\cdot).
    \end{equation*}
  \item\label{ss:localnonlattice} If $\Scontr$ is nonlattice, then the local Minkowski content exists, and we have $\Mink(\sset,\cdot)=\tilde{\Mink}(\sset,\cdot)$.
\end{enumerate}
\end{theorem}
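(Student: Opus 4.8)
The plan is to analyse the rescaled finite measures $\nu_\e(\cdot)\defeq\e^{\hdim-\edim}\leb(\sset_\e\cap\cdot)$ together with their logarithmic Cesàro averages $\tilde\nu_T(\cdot)\defeq\lvert\ln T\rvert^{-1}\int_T^1\e^{\hdim-\edim-1}\leb(\sset_\e\cap\cdot)\,\textup{d}\e$, and to pin down all of their weak subsequential limits by exploiting the self-similarity shared by the parallel volume and the Hausdorff measure $\mu_\hdim$. The starting point is a scaling relation: since $\Scontr$ satisfies the SSC there is an $\e_0>0$ so that for $\e\le\e_0$ the sets $(\scontr_i\sset)_\e$ are pairwise disjoint, and since each $\scontr_i$ is a similarity of ratio $\sratio_i$ one has $(\scontr_i\sset)_\e=\scontr_i(\sset_{\e/\sratio_i})$. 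A change of variables then gives the identity of finite measures, valid for every $\e\le\e_0$,
\[
\nu_\e=\sum_{i=1}^N\sratio_i^{\hdim}\,(\scontr_i)_\star\,\nu_{\e/\sratio_i}.
\]
I would first note that each $\nu_\e$ (for $\e\le1$) is supported in the fixed compact set $\sset_1$ and that $\sup_{\e\le1}\nu_\e(\mathbb R^\edim)=\sup_{\e\le1}\e^{\hdim-\edim}\leb(\sset_\e)<\infty$ because $\sset$ has finite upper Minkowski content; hence both families $\{\nu_\e\}$ and $\{\tilde\nu_T\}$ are tight and uniformly bounded, so relatively compact in the weak topology, and it suffices to identify their subsequential limits.

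For part (\ref{ss:localaverage}) the claim is that every weak subsequential limit $\tilde m$ of $\tilde\nu_T$ as $T\to0$ equals $\tilde{\Mink}(\sset)\mu_\hdim$. Testing against a bounded continuous $u$ vanishing near $\sset$ forces $\int u\,\textup{d}\nu_\e=0$ for small $\e$, so $\tilde m$ is supported on $\sset$. Next I would average the scaling relation over $\e\in[T,1]$ against a continuous $u$ and substitute $\e'=\e/\sratio_i$ in each summand; every boundary contribution carries the factor $\lvert\ln T\rvert^{-1}$ and thus vanishes as $T\to0$, leaving
\[
\int u\,\textup{d}\tilde m=\sum_{i=1}^N\sratio_i^{\hdim}\int u\circ\scontr_i\,\textup{d}\tilde m=\sum_{i=1}^N\sratio_i^{\hdim}\int u\,\textup{d}\bigl((\scontr_i)_\star\tilde m\bigr),
\]
that is $\tilde m=\sum_i\sratio_i^{\hdim}(\scontr_i)_\star\tilde m$. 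Since $\sum_i\sratio_i^{\hdim}=1$, the operator $m\mapsto\sum_i\sratio_i^{\hdim}(\scontr_i)_\star m$ preserves total mass and is a contraction on probability measures in the Hutchinson metric, so fixed points of a prescribed mass are unique; as $\mu_\hdim$ itself satisfies $\mu_\hdim=\sum_i\sratio_i^{\hdim}(\scontr_i)_\star\mu_\hdim$ under the SSC, every such $\tilde m$ is a scalar multiple of $\mu_\hdim$. The scalar is the common mass $\lim_{T\to0}\tilde\nu_T(\mathbb R^\edim)=\tilde{\Mink}(\sset)$, which exists and is positive and finite by Theorem \ref{thm:Gatzouras}(\ref{ss:averageMink}). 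Hence all subsequential limits agree, giving $\tilde{\Mink}(\sset,\cdot)=\tilde{\Mink}(\sset)\mu_\hdim(\cdot)$.

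For part (\ref{ss:localnonlattice}) the same identification applies verbatim once one knows that the un-averaged limit $\wlim_{\e\to0}\nu_\e$ exists: a genuine limit $m$ is automatically invariant under the scale shift $\e\mapsto\e/\sratio_i$, so passing to the limit in the scaling relation (using weak continuity of each $(\scontr_i)_\star$) yields the identical fixed-point equation $m=\sum_i\sratio_i^{\hdim}(\scontr_i)_\star m$, whence $m=\tilde{\Mink}(\sset)\mu_\hdim=\tilde{\Mink}(\sset,\cdot)$ and simultaneously $\Mink(\sset,\cdot)=\tilde{\Mink}(\sset,\cdot)$. Thus everything reduces to proving convergence of $\int u\,\textup{d}\nu_\e$ for each continuous $u$. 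Writing $\e=\ee^{-t}$, $\xi_i\defeq-\ln\sratio_i>0$ and $Z_u(t)\defeq\int u\,\textup{d}\nu_{\ee^{-t}}$, the scaling relation turns into the renewal equation
\[
Z_u(t)=\sum_{i=1}^N\sratio_i^{\hdim}\,Z_{u\circ\scontr_i}(t-\xi_i),
\]
to which I would apply the key renewal theorem: the weights $\sratio_i^{\hdim}$ form a probability vector and the nonlattice hypothesis on $\Scontr$ is precisely that the $\xi_i$ generate a nonlattice subgroup of $\mathbb R$.

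The main obstacle is exactly this last step. Unlike the global case $u\equiv1$ of Theorem \ref{thm:Gatzouras}, the renewal equation for $Z_u$ is \emph{function-valued}, because the test function is replaced by $u\circ\scontr_i$ at each stage, so it is not literally a scalar renewal equation and its forcing term (built from the overlap described by the scaling function $R_\edim$, now localised by $u$) must be shown to be directly Riemann integrable. I would resolve this using the uniform continuity of $u$ together with the exponential contraction $\operatorname{diam}(\scontr_\omega\sset)\le(\max_i\sratio_i)^{\lvert\omega\rvert}\operatorname{diam}(\sset)$: on deep cylinders $u\circ\scontr_\omega$ is constant up to an error that decays geometrically, which decouples the test function from the renewal dynamics and reduces the equation, modulo a directly Riemann integrable remainder, to the scalar renewal equation already handled by Gatzouras. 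The nonlattice renewal theorem then supplies $\lim_{t\to\infty}Z_u(t)$, which completes part (\ref{ss:localnonlattice}).
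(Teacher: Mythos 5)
Your argument is essentially correct, but note first that the paper does not actually prove this statement: it is quoted from Winter's thesis (``Theorem \ref{thm:Winter} corresponds to Theorem 2.5.1 in \cite{Winter_thesis}''), so the only meaningful comparison is with Winter's method, which the paper reproduces in adapted form in its own proof of Theorem \ref{thm:locMinkcresult}(\ref{locaverage}). That method identifies the limit measure by evaluating it on the intersection-stable generator $\{\scontr_{\om}\opens\mid\om\in\ABC^*\}\cup\mathcal K_{\sset}$, computing the (averaged) limit of $(\ee^{-t})^{\hdim-\edim}\leb(\sset_{\ee^{-t}}\cap\scontr_{\nu}\opens)$ by renewal theory \`a la Gatzouras, and showing via an inner-neighbourhood estimate (Lemma \ref{lem:inner}) that sets in $\mathcal K_{\sset}$ carry no mass in the limit. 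Your route is genuinely different and, taking Theorem \ref{thm:Gatzouras} as a black box, arguably more self-contained: for (\ref{ss:localaverage}) you extract subsequential weak limits by Prohorov, show each satisfies $\tilde m=\sum_i\sratio_i^{\hdim}(\scontr_i)_\star\tilde m$ (the boundary terms in the change of variables are $O(1/\lvert\ln T\rvert)$ precisely because $\sup_{0<\e\le1}\e^{\hdim-\edim}\leb(\sset_\e)<\infty$), and invoke uniqueness of the Hutchinson fixed point among probability measures supported on $\sset$ (contraction ratio $\sum_i\sratio_i^{\hdim+1}<1$) together with the mass identification $\tilde m(\mathbb R^{\edim})=\tilde{\Mink}(\sset)$; this replaces both the generator machinery and any further renewal analysis. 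For (\ref{ss:localnonlattice}) your decoupling idea is the right one and can be made fully precise without any function-valued renewal theorem: iterate the exact relation to $\nu_\e=\sum_{n(\om)=n}\sratio_\om^{\hdim}(\scontr_\om)_\star\nu_{\e/\sratio_\om}$ (valid once $\e\le\e_0\sratio_{\min}^{\,n-1}$, so for fixed $n$ and $\e$ small), replace $u\circ\scontr_\om$ by the constant $u(\scontr_\om x_0)$ at the cost of the oscillation of $u$ on sets of diameter at most $(\max_i\sratio_i)^n(1+2\e_0)$, apply Gatzouras's scalar nonlattice result to each total mass $\nu_{\e/\sratio_\om}(\mathbb R^{\edim})$, and let $n\to\infty$ to get $\int u\,\textup{d}\nu_\e\to\Mink(\sset)\int u\,\textup{d}\mu_{\hdim}$. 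The only details you should spell out are the finiteness of $\sup_{0<\e\le1}\e^{\hdim-\edim}\leb(\sset_\e)$ (finite upper Minkowski content plus continuity of the parallel volume on $(0,1]$), the restriction that the scaling relation only holds below $\e_0$ (handled by the $1/\lvert\ln T\rvert$ factor in the averaged case and by taking $\e$ small in the pointwise case), and the standard facts that $\mu_{\hdim}=\sum_i\sratio_i^{\hdim}(\scontr_i)_\star\mu_{\hdim}$ and that $\sum_{n(\om)=n}\sratio_\om^{\hdim}\delta_{\scontr_\om x_0}\to\mu_{\hdim}$ weakly.
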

Theorem \ref{thm:Winter} corresponds to Theorem 2.5.1 in \cite{Winter_thesis}.\\

Note that all the results from Theorems \ref{thm:Gatzouras} and \ref{thm:Winter} actually hold under the weaker OSC.
Moreover, under certain additional assumptions an alternative formula for the (average) Minkowski content of $\sset$ can be found in \cite{Trken,LapPeaWin}.\\

Let us return to the two Cantor sets $C_1$ and $C_2$ which were described at the beginning of the introduction. An application of the theorem of Gatzouras (Theorem \ref{thm:Gatzouras}) yields explicit values for their average Minkowski contents in the following way.
\begin{example}
	Recall the construction of the two Cantor sets $C_1$ and $C_2$ from the beginning of the introduction. 
	$C_1$ is the invariant set of the iterated function system $\Scontr\defeq\{\scontr_1,\ldots,\scontr_4\}$, where $\scontr_i(x)=x/7+2(i-1)/7$ for $i\in\{1,\ldots,4\}$. It can be easily verified that the IFS $\Scontr$ satisfies the conditions of Setting \ref{setting} and that the Minkowski dimension of $C_1$ is equal to $\hdim=\ln 4/\ln 7$. An application of Theorem \ref{thm:Gatzouras} now yields that
\label{ex:C1C2}
	\[
		\tilde{\Mink}(C_1)=\frac{3}{2}\cdot\frac{2^{-\hdim}}{(1-\hdim)\hdim\ln 7}.
	\]
	Using the fact that Theorem \ref{thm:Gatzouras} also holds under the weaker OSC, we likewise obtain that 
	\[
		\tilde{\Mink}(C_2)=\frac{3^{\hdim}}{2}\cdot\frac{2^{-\hdim}}{(1-\hdim)\hdim\ln 7},
	\]
	where $\hdim=\ln 4/\ln 7$ is the Minkowski dimension of $C_2$. Thus, $\tilde{\Mink}(C_1)>\tilde{\Mink}(C_2)$.
\end{example}

Combining our Theorem \ref{thm:Minkcresult} with the results from Theorem \ref{thm:Gatzouras} we immediately obtain the following explicit formulae for the (average) Minkowski content of the $\mathcal C^{1+\alpha}$ image $\cset$.
\begin{corollary}
	With the notation of Setting \ref{setting}, let $\hdim$ denote the Minkowski dimension of $\sset$ (and hence $\cset$). Further, denote the scaling function of $\sset$ by $R_d(\sset,\cdot)$ and let $\mu_{\hdim}$ denote the $\hdim$-dimensional normalised Hausdorff measure on $\sset$. Then the following hold.
\label{formulaeMinkc}
  \begin{enumerate}
  \item The average Minkowski content $\tilde{\Mink}(F)$ of $\cset$ exists, is positive and is given by
    \begin{equation}\label{eq:formulaaverage}
      \tilde{\Mink}(\cset)=-\left(\sum_{i=1}^N\sratio_i^{\hdim}\ln\sratio_i\right)^{-1}\int_0^1\e^{\hdim-\edim-1}R_d(\sset,\e)\textup{d}\e\cdot\int_{\sset}\lvert\map'\rvert^{\hdim}\textup{d}\mu_{\hdim}.
    \end{equation}
  \item If $\Scontr$ is nonlattice, then the Minkowski content of $\cset$ exists and coincides with the average Minkowski content, that is $\Mink(\cset)=\tilde{\Mink}(\cset)$.
	\end{enumerate}
\end{corollary}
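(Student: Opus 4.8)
The plan is to chain together the two results already at our disposal, namely the relation between the (average) Minkowski contents of $\sset$ and $\cset$ furnished by Theorem \ref{thm:Minkcresult} and the explicit self-similar formula of Theorem \ref{thm:Gatzouras}.

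First I would prove part (i). By Theorem \ref{thm:Minkcresult}(i) the average Minkowski content $\tilde{\Mink}(\cset)$ exists, is positive and finite, and satisfies
\[
\tilde{\Mink}(\cset)=\tilde{\Mink}(\sset)\cdot\int_{\sset}\lvert\map'\rvert^{\hdim}\textup{d}\mu_{\hdim}.
\]
Substituting the explicit expression for $\tilde{\Mink}(\sset)$ provided by Theorem \ref{thm:Gatzouras}(i) then yields the asserted formula \eqref{eq:formulaaverage}. The only point requiring a short argument is positivity of the factor $\int_{\sset}\lvert\map'\rvert^{\hdim}\textup{d}\mu_{\hdim}$: since $\map$ is a conformal diffeomorphism defined on the open domain $\mathcal U\supseteq\sset_{\nf{1}{2}}$, its length scaling ratio $\lvert\map'\rvert$ is continuous and strictly positive on the compact set $\sset$, hence bounded below by a positive constant there; as $\mu_{\hdim}$ is a probability measure on $\sset$, the integral is strictly positive. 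Together with the positivity of $\tilde{\Mink}(\sset)$ from Theorem \ref{thm:Gatzouras}(i) this gives $\tilde{\Mink}(\cset)>0$.

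For part (ii) I would proceed as follows. If $\Scontr$ is nonlattice, Theorem \ref{thm:Gatzouras}(ii) gives $\Mink(\sset)=\tilde{\Mink}(\sset)$, so in particular $\sset$ is Minkowski measurable. Theorem \ref{thm:Minkcresult}(ii) then transfers Minkowski measurability to the image, yielding that $\cset$ is Minkowski measurable with $\Mink(\cset)=\tilde{\Mink}(\cset)$, the latter being the explicit value computed in part (i).

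Since both ingredients are available as stated results, I expect no genuine obstacle here; the argument is a direct substitution followed by an application of the measurability-transfer statement. The only care needed is the verification that the geometric factor is strictly positive, which as noted rests on $\lvert\map'\rvert$ being continuous and nonvanishing on the compact set $\sset$.
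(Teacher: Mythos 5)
Your proposal is correct and follows exactly the route the paper intends: the corollary is stated there as an immediate consequence of combining Theorem \ref{thm:Minkcresult} with Theorem \ref{thm:Gatzouras}, which is precisely your substitution-plus-transfer argument. Your added remark on the strict positivity of $\int_{\sset}\lvert\map'\rvert^{\hdim}\textup{d}\mu_{\hdim}$ is a correct (if routine) supplement that the paper leaves implicit.
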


Combining Theorem \ref{thm:locMinkcresult} with Theorem \ref{thm:Winter} we obtain the following corollary for the local (average) Minkowski content.
\begin{corollary}
	With the notation of Setting \ref{setting}, denote by $\hdim$ the Minkowski dimension of $\sset$ (and hence $\cset$) and let $\mu_{\hdim}$ denote the $\hdim$-dimensional normalised Hausdorff measure on $\sset$. Then the following hold.
\label{cor:local}
\begin{enumerate}
  \item The local average Minkowski content $\tilde{\Mink}(\cset,\cdot)$ of $\cset$ exists and satisfies
    \[
			\frac{\textup{d}\tilde{\Mink}(\cset,\cdot)}{\textup{d}\left(\map_{\star}\mu_{\hdim}\right)(\cdot)}
			=\frac{\lvert\map'\circ\map^{-1}\rvert^{\hdim}}{\int_{\sset}\lvert\map'\rvert^{\hdim}\textup{d}\mu_{\hdim}}\cdot \tilde{\Mink}(\cset).
    \]
   \item\label{formulae_nonlattice_average} If $\Scontr$ is nonlattice, then the local Minkowski content $\Mink(F,\cdot)$ of $\cset$ exists and is equal to $\tilde{\Mink}(\cset,\cdot)$.
  \end{enumerate}
\end{corollary}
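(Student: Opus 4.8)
The statement is a corollary, so the plan is simply to compose the transfer result of Theorem~\ref{thm:locMinkcresult} with the explicit description of the local (average) Minkowski content of $\sset$ furnished by Theorem~\ref{thm:Winter}, carefully tracking the scalar prefactors; no new analytic estimate is needed.

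For part~(i) I would start from Theorem~\ref{thm:locMinkcresult}(\ref{locaverage}), which already asserts the existence of $\tilde{\Mink}(\cset,\cdot)$ together with the identity
\[
\frac{\textup{d}\tilde{\Mink}(\cset,\cdot)}{\textup{d}\left(\map_{\star}\tilde{\Mink}(\sset,\cdot)\right)}=\lvert\map'\circ\map^{-1}\rvert^{\hdim}.
\]
Into this I would substitute the formula $\tilde{\Mink}(\sset,\cdot)=\tilde{\Mink}(\sset)\,\mu_{\hdim}(\cdot)$ from Theorem~\ref{thm:Winter}(\ref{ss:localaverage}). As push-forward is linear, $\map_{\star}\tilde{\Mink}(\sset,\cdot)=\tilde{\Mink}(\sset)\,\map_{\star}\mu_{\hdim}(\cdot)$, so that expressing the Radon--Nikodym derivative relative to the base measure $\map_{\star}\mu_{\hdim}$ merely multiplies the density by the positive finite constant $\tilde{\Mink}(\sset)$, giving
\[
\frac{\textup{d}\tilde{\Mink}(\cset,\cdot)}{\textup{d}\left(\map_{\star}\mu_{\hdim}\right)(\cdot)}=\tilde{\Mink}(\sset)\,\lvert\map'\circ\map^{-1}\rvert^{\hdim}.
\]
It then remains to eliminate $\tilde{\Mink}(\sset)$ in favour of $\tilde{\Mink}(\cset)$ by means of the relation $\tilde{\Mink}(\cset)=\tilde{\Mink}(\sset)\int_{\sset}\lvert\map'\rvert^{\hdim}\,\textup{d}\mu_{\hdim}$ of Theorem~\ref{thm:Minkcresult}(i); since the integral is a strictly positive finite number, solving for $\tilde{\Mink}(\sset)$ and substituting yields precisely the asserted formula.

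Part~(ii) follows at once. If $\Scontr$ is nonlattice, Theorem~\ref{thm:Winter}(\ref{ss:localnonlattice}) guarantees that the local Minkowski content $\Mink(\sset,\cdot)$ of $\sset$ exists and coincides with $\tilde{\Mink}(\sset,\cdot)$. This is exactly the hypothesis of Theorem~\ref{thm:locMinkcresult}(\ref{locexistence}), which then delivers the existence of $\Mink(\cset,\cdot)$ and the equality $\Mink(\cset,\cdot)=\tilde{\Mink}(\cset,\cdot)$, as claimed.

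The only point that demands attention---and hence the sole ``obstacle,'' minor as it is---is the change of base measure in the Radon--Nikodym derivative: one must check that replacing $\map_{\star}\tilde{\Mink}(\sset,\cdot)=\tilde{\Mink}(\sset)\,\map_{\star}\mu_{\hdim}$ by $\map_{\star}\mu_{\hdim}$ scales the density by the constant factor $\tilde{\Mink}(\sset)$ without introducing any measurability or vanishing-denominator difficulty. This is legitimate because $\tilde{\Mink}(\sset)$ is strictly positive and finite by Theorem~\ref{thm:Gatzouras}(\ref{ss:averageMink}), and $\int_{\sset}\lvert\map'\rvert^{\hdim}\,\textup{d}\mu_{\hdim}$ is likewise positive and finite, as $\lvert\map'\rvert$ is continuous and bounded away from $0$ on the compact set $\sset$.
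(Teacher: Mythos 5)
Your proposal is correct and follows exactly the route the paper intends: the corollary is stated there as an immediate consequence of combining Theorem~\ref{thm:locMinkcresult} with Theorem~\ref{thm:Winter} (together with Theorem~\ref{thm:Minkcresult}(i) to trade $\tilde{\Mink}(\sset)$ for $\tilde{\Mink}(\cset)$), with no separate proof given. Your attention to the rescaling of the base measure and the positivity of $\tilde{\Mink}(\sset)$ and of $\int_{\sset}\lvert\map'\rvert^{\hdim}\,\textup{d}\mu_{\hdim}$ is exactly the right bookkeeping.
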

Observe that the measure $\mu$ given by $\frac{\textup{d}\mu}{\textup{d}\left(\map_{\star}\mu_{\hdim}\right)}=\frac{\lvert\map'\circ\map^{-1}\rvert^{\hdim}}{\int_{\sset}\lvert\map'\rvert^{\hdim}\textup{d}\mu_{\hdim}}$ coincides with the $\hdim$-conformal measure of the IFS $\tilde{\Ccontr}\eqdef\{\tilde{\ccontr}_1,\ldots,\tilde{\ccontr}_M\}$, where $\tilde{\Ccontr}$ is defined as in Setting \ref{setting} and $M\in\mathbb N$. Here the \emph{$\hdim$-conformal measure} of $\tilde{\Ccontr}$ is the unique probability measure $\mu$ supported on $\cset$, which satisfies
\[
	\mu(\tilde{\ccontr}_i B)=\int_{B}\lvert\tilde{\ccontr}'_i\rvert^{\hdim}\textup{d}\mu
\]
for all $i\in\{1,\ldots,M\}$ and all Borel sets $B\subset\mathbb R^{\edim}$. For more details about this measure, see, for example \cite{MauldinUrbanski}.

\begin{remark}
The results of this paper are concerned with self-conformal sets which arise as $\mathcal C^{1+\alpha}$-images of self-similar sets. For general self-conformal sets in $\mathbb R^{\edim}$, which cannot necessarily be obtained in this way, our results and the ones presented in \cite{KesKom} for subsets of $\mathbb R$ suggest that the (average) local Minkowski content is a constant multiple of the $\hdim$-conformal measure, whenever it exists. This has recently been obtained under certain geometric assumptions and will be presented in a forthcoming paper by the second author, where the dichotomy of lattice versus nonlattice will also be discussed.
\end{remark}

\section{Proofs}
Observe that Theorem \ref{thm:Minkcresult} follows immediately from Theorem \ref{thm:locMinkcresult}. Thus, in this section we exclusively deal with the proof of Theorem \ref{thm:locMinkcresult}. The proofs of the first and second part of Theorem \ref{thm:locMinkcresult} differ quite significantly. However, certain tools are used in both proofs and these tools are presented in Lemmas \ref{lem:bdp} to \ref{lem:disj}. Before turning to them, let us fix some notation.\\

As described in Setting \ref{setting}, let $\sset$ denote the self-similar set which is generated by the iterated function system $\Scontr\defeq\{\scontr_1,\ldots,\scontr_N\}$ consisting of contracting similarities with contraction ratios $\sratio_1,\ldots,\sratio_N$. We assume without loss of generality that $\diam(\sset)=1$.

\paragraph{The Code Space $\Sigma$.}
Recall that we refer to the set $\ABC\defeq\{1,\ldots,N\}$ as the alphabet and let $\ABC^n$ denote the space of words of length $n\in\mathbb N$ over $\ABC$. Furthermore, let $\ABC^*\defeq\bigcup_{n\in\mathbb N\cup\{0\}} \ABC^n$ denote the space of all finite words over $\ABC$ including the empty word $\emptyset$ and recall that $\codes\defeq\ABC^{\mathbb N}$ denotes the code space which represents the set of infinite words over $\ABC$.
For a finite word $\om\in\ABC^*$ its length is denoted by $n(\om)$. For $\om\defeq\om_1\cdots\om_n\in\ABC^*$ we set $\phi_{\om}\defeq\phi_{\om_1}\circ\cdots\circ\phi_{\om_n}$, $\sratio_\om\defeq\sratio_{\om_1}\cdots\sratio_{\om_n}$ and define
$[\om]\defeq\{\overline{\om}\in\codes\mid\overline{\om}_i=\om_i\ \text{for all}\ i\in\{1,\ldots,n(\om)\}\}$ to be the \emph{$\om$-cylinder set}.
Moreover for $\om\defeq\om_1\om_2\cdots\in\codes$ and $n\in\mathbb N$ we denote the initial word of length $n$ of $\om$ by $\om\vert_n\defeq\om_1\om_2\cdots\om_n$. 
Finally, we set $\sratio_{\min}\defeq\min\{\sratio_1,\ldots,\sratio_N\}$.

\paragraph{The Word Space $\codeepsde$.}
We denote the minimal length scaling ratio of $\map$ on the $(1/2)$-parallel neighbourhood $\sset_{1/2}$ of $\sset$ by
\begin{equation*}
  \low\defeq\min_{x\in\sset_{\nf{1}{2}}}|g'(x)|.
\end{equation*}
Since $\map$ is a diffeomorphism with domain $\mathcal U\supset\sset_{1/2}$, we have that $\low>0$. Recall that $\map\in\mathcal C^{1+\alpha}(\mathcal U)$ and denote by $\Lip$ the H\"older constant of $\lvert\map'\rvert$.
For $\de>0$ and $\e\geq 0$ set 
\begin{align*}
\bepsde&\defeq \left(\frac{\de \low}{\Lip}\right)^{\nf{1}{\holderexp}}-2\frac{\e}{\low}\qquad\text{and}\\
\codeepsde&\defeq \left\{\om\in\ABC^*\mid \sratio_{\om}\leq\bepsde\ \text{and}\ \sratio_{\om\vert_{n(\om)-1}}>\bepsde\right\}.
\end{align*}
The family $\codeepsde$ (and in particular $\bepsde$) is constructed in such a way that 
\begin{enumerate}
	\item a powerful bounded distortion lemma holds for $\lvert\map'\rvert$ on $(\e/\low)$-neighbourhoods of $\scontr_{\om}\sset$ for $\om\in\codeepsde$ and sufficiently small $\e\geq 0$ (see Lemma \ref{lem:bdp}) and
	\item $\sset_{\e/\low}$ can be written as a disjoint union of the sets $(\scontr_{\om}\sset)_{\e/\low}$, where the union ranges over $\om\in\codeepsde$ (see Lemmas \ref{lem:union} and \ref{lem:disj}).
\end{enumerate}

\begin{lemma}[Bounded Distortion Lemma]
  For $\de>0,\ 0\leq\e\leq \frac{\low}{2}$ and an arbitrary $\om\in\codeepsde$ we have that
\label{lem:bdp}
  \[
  (1+\de)^{-1}\leq\frac{|\map'(x)|}{|\map'(y)|}\leq 1+\de\qquad\text{for all}\ x,y\in (\scontr_{\om}\sset)_{\nf{\e}{\low}}.
  \]
\end{lemma}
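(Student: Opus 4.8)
The plan is to convert the combinatorial membership $\om\in\codeepsde$ into a metric bound on the size of the parallel set $(\scontr_{\om}\sset)_{\nf{\e}{\low}}$, and then to feed this bound into the $\holderexp$-H\"older continuity of $\lvert\map'\rvert$, finally normalising by the uniform lower bound $\low$ for $\lvert\map'\rvert$ on $\sset_{1/2}$. First I would estimate the diameter of the set in which $x$ and $y$ live. Since $\scontr_{\om}$ is a similarity with ratio $\sratio_{\om}$ and we have normalised $\diam(\sset)=1$, the image $\scontr_{\om}\sset$ has diameter exactly $\sratio_{\om}$, so passing to the $(\nf{\e}{\low})$-parallel neighbourhood increases the diameter by at most $2\nf{\e}{\low}$. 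Hence any $x,y\in(\scontr_{\om}\sset)_{\nf{\e}{\low}}$ satisfy $\met(x,y)\leq\sratio_{\om}+2\nf{\e}{\low}$.

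The second step uses the defining property of $\codeepsde$. As $\om\in\codeepsde$ forces $\sratio_{\om}\leq\bepsde$, the very definition of $\bepsde$ gives $\sratio_{\om}+2\nf{\e}{\low}\leq(\nf{\de\low}{\Lip})^{\nf{1}{\holderexp}}$, so that $\met(x,y)^{\holderexp}\leq\nf{\de\low}{\Lip}$. Applying the $\holderexp$-H\"older estimate for $\lvert\map'\rvert$ with constant $\Lip$ then yields $\bigl\lvert\lvert\map'(x)\rvert-\lvert\map'(y)\rvert\bigr\rvert\leq\Lip\,\met(x,y)^{\holderexp}\leq\de\low$. To turn this additive bound into a multiplicative one I would invoke the lower bound $\lvert\map'\rvert\geq\low$; this is legitimate because $\scontr_{\om}\sset\subseteq\sset$ together with $\e\leq\nf{\low}{2}$ guarantees $(\scontr_{\om}\sset)_{\nf{\e}{\low}}\subseteq\sset_{\nf{\e}{\low}}\subseteq\sset_{1/2}$, the domain on which $\low$ was defined. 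Dividing through by $\lvert\map'(y)\rvert\geq\low$ then gives $\lvert\map'(x)\rvert/\lvert\map'(y)\rvert\leq 1+\de$.

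The only subtlety is that this direct argument produces the lower bound $1-\de$ rather than the sharper $(1+\de)^{-1}$ appearing in the statement, and since $1-\de\leq(1+\de)^{-1}$ the former is too weak. The remedy is a symmetry observation: the estimate above is symmetric in $x$ and $y$, so interchanging their roles also gives $\lvert\map'(y)\rvert/\lvert\map'(x)\rvert\leq 1+\de$, which upon taking reciprocals is exactly $\lvert\map'(x)\rvert/\lvert\map'(y)\rvert\geq(1+\de)^{-1}$. Combining the two inequalities completes the argument. I do not expect any genuine obstacle here; the computation is short, and the only point requiring care is verifying the chain of inclusions that places $x$ and $y$ inside $\sset_{1/2}$, where the lower bound $\low$ is available.
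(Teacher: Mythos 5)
Your proposal is correct and follows essentially the same route as the paper's proof: the same diameter bound $\diam(\scontr_{\om}\sset)_{\nf{\e}{\low}}\leq\bepsde+2\nf{\e}{\low}=(\nf{\de\low}{\Lip})^{\nf{1}{\holderexp}}$, the same H\"older estimate yielding $\bigl\lvert\lvert\map'(x)\rvert-\lvert\map'(y)\rvert\bigr\rvert\leq\de\low$, division by $\lvert\map'(y)\rvert\geq\low$, and the same symmetry argument for the lower bound $(1+\de)^{-1}$. Your explicit verification of the inclusion $(\scontr_{\om}\sset)_{\nf{\e}{\low}}\subseteq\sset_{1/2}$ (which is where the hypothesis $\e\leq\nf{\low}{2}$ enters) is a point the paper leaves implicit, but otherwise the arguments coincide.
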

\begin{proof}
  Since $\om$ lies in $\codeepsde$, the diameter of the set $(\scontr_{\om}\sset)_{\nf{\e}{\low}}$ satisfies
  \[
  \diam(\scontr_{\om}\sset)_{\nf{\e}{\low}}
  =\sratio_{\om}\underbrace{\diam(\sset)}_{=1}+2\frac{\e}{\low}
  \leq \bepsde+2\frac{\e}{\low}
  =\left(\frac{\de \low}{\Lip}\right)^{\nf{1}{\holderexp}}.
  \]
  Recalling that $\lvert\map'\rvert$ is $\alpha$-H\"older continuous with H\"older constant $\Lip$, we hence have 
  \[
  \big\lvert |\map'(x)|-|\map'(y)| \big\rvert\leq\Lip|x-y|^{\holderexp}\leq\de\low\qquad\text{for all}\ x,y\in (\scontr_{\om}\sset)_{\nf{\e}{\low}}.
  \]
  Thus, 
  \[
  \frac{|\map'(x)|}{|\map'(y)|}\leq\frac{\big\lvert|\map'(x)|-|\map'(y)|\big\rvert}{|\map'(y)|}+1\leq\de+1
  \]
	for all $x,y\in (\scontr_{\om}\sset)_{\nf{\e}{\low}}$.
  The second inequality follows on interchanging the roles of $x$ and $y$.
\end{proof}
\begin{lemma}
  For $\de>0$ and $0\leq\e<\frac{\low}{2}\left(\frac{\de \low}{\Lip}\right)^{\nf{1}{\holderexp}}$ we have that
	\label{lem:union}	
  \[
  \sset=\bigcup_{\om\in\codeepsde}\scontr_{\om}\sset.
  \]
\end{lemma}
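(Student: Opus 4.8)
The plan is to prove the two inclusions separately. The inclusion ``$\supseteq$'' is immediate from invariance, while the genuine content lies in ``$\subseteq$'', which I would obtain from the surjectivity of the code map combined with a stopping-time argument on the word space. For ``$\supseteq$'': since $\sset$ is the invariant set of $\Scontr$, we have $\scontr_i\sset\subseteq\sset$ for each $i\in\ABC$, and hence by induction on word length $\scontr_{\om}\sset\subseteq\sset$ for every $\om\in\ABC^*$. In particular this holds for every $\om\in\codeepsde$, so the union on the right is contained in $\sset$.

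For ``$\subseteq$'' I would first record that the hypothesis $0\le\e<\frac{\low}{2}\left(\frac{\de\low}{\Lip}\right)^{\nf{1}{\holderexp}}$ is precisely equivalent to $\bepsde>0$ (rearrange $2\e/\low<(\de\low/\Lip)^{\nf{1}{\holderexp}}$). The essential step is then to show that $\codeepsde$ is a covering section of the code space, that is, every infinite word $\om\in\codes$ admits a prefix lying in $\codeepsde$. Since each contraction ratio $\sratio_i$ lies in $(0,1)$, the partial products $\sratio_{\om\vert_n}$ form a strictly decreasing sequence with $\sratio_{\om\vert_0}=1$ and $\sratio_{\om\vert_n}\to 0$ as $n\to\infty$. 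Because $\bepsde>0$, there is therefore a smallest index $n\ge 1$ with $\sratio_{\om\vert_n}\le\bepsde$, and by minimality $\sratio_{\om\vert_{n-1}}>\bepsde$, so that $\om\vert_n\in\codeepsde$.

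It then remains to pass from codes back to points: given $x\in\sset$, choose $\om\in\codes$ with $\pi(\om)=x$ (the code map is surjective onto $\sset$), select the prefix $\om\vert_n\in\codeepsde$ as above, and use that the cylinder images are nested, so $\pi(\om)\in\bigcap_{m}\scontr_{\om\vert_m}\sset\subseteq\scontr_{\om\vert_n}\sset$. Hence $x\in\scontr_{\om\vert_n}\sset\subseteq\bigcup_{\om'\in\codeepsde}\scontr_{\om'}\sset$, which yields the reverse inclusion and completes the argument.

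The argument is largely bookkeeping, so I do not expect a serious obstacle; the two points that must be got right are (a) that $\bepsde>0$, which guarantees the stopping time is finite and hence that every code genuinely crosses the threshold, and (b) the implicit requirement $\bepsde<1$, needed so that the empty-word ratio $\sratio_{\om\vert_0}=1$ strictly exceeds $\bepsde$ and length-one words already qualify (otherwise $\sratio_{\om\vert_{n-1}}>\bepsde$ can never hold, $\codeepsde$ is empty, and the union degenerates). I would flag (b) as the condition on $\de$ that the relevant small-$\de$ regime supplies, and otherwise anticipate no difficulty.
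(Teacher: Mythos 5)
Your argument is correct and is essentially the paper's own proof, which simply observes that the hypothesis forces $\bepsde>0$, so that every infinite code $\om\in\codes$ has a minimal prefix $\om\vert_n$ with $\sratio_{\om\vert_n}\leq\bepsde$ and hence a prefix in $\codeepsde$; the surjectivity of the code map, the nesting of the cylinder images, and the trivial inclusion ``$\supseteq$'' are left implicit there. Your side remark (b) about needing $\bepsde<1$ (so that the stopping time is at least $1$ and $\codeepsde$ is nonempty) is a fair observation about the empty-word convention which the paper does not address either, but it is harmless since the lemma is only invoked in the small-$\de$ regime.
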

\begin{proof}
  The condition $0\leq\e<\frac{\low}{2}\left(\frac{\de \low}{\Lip}\right)^{\nf{1}{\holderexp}}$ implies that $\bepsde=\left(\frac{\de \low}{\Lip}\right)^{\nf{1}{\holderexp}}-2\frac{\e}{\low}>0$.
  Therefore for every $\om\in\codes$ there exists an $n\in\mathbb N$ such that $\sratio_{\om|_n}\leq\bepsde$.
\end{proof}

For the following lemma, we define 
\[
	\met(Y,Z)\defeq\min_{y\in Y}\met(y,Z)\defeq\min_{y\in Y}\min_{z\in Z}\met(y,z)
\]
for compact sets $Y,Z\subset\mathbb R^{\edim}$. Moreover, we set 
\[
	\ssc\defeq \min_{i\neq j\in\ABC}\met(\scontr_i\sset,\scontr_j\sset)/2
\]
and remark that $\ssc$ is positive because $\Scontr$ satisfies the SSC.
\begin{lemma}\label{lem:disj}
  For $\de>0$ and $0\leq\e<\e_0(\de)\defeq\low\cdot\sratio_{\min}\cdot\left(\frac{\de\low}{\Lip}\right)^{\nf{1}{\holderexp}}\cdot\frac{\ssc}{1+2\ssc\sratio_{\min}}$ the elements of $\{(\scontr_{\om}\sset)_{\nf{\e}{\low}}\mid \om\in\codeepsde\}$ are pairwise disjoint for distinct $\om\in\codeepsde$, that is
  \[
  (\scontr_{\om}\sset)_{\nf{\e}{\low}}\cap (\scontr_{\omneu}\sset)_{\nf{\e}{\low}}=\emptyset\qquad\text{for all}\ \om\neq\omneu\in\codeepsde.
  \]
\end{lemma}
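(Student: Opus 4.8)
The plan is to show that distinct words $\om,\omneu\in\codeepsde$ cannot have overlapping $(\e/\low)$-neighbourhoods by reducing to the base separation of the IFS. First I would consider two distinct words $\om\neq\omneu\in\codeepsde$ and let $\zeta$ denote their longest common prefix (possibly empty), writing $\om=\zeta i\tilde\om$ and $\omneu=\zeta j\tilde\omneu$ with $i\neq j\in\ABC$. Then $\scontr_{\om}\sset\subseteq\scontr_{\zeta}\scontr_i\sset$ and $\scontr_{\omneu}\sset\subseteq\scontr_{\zeta}\scontr_j\sset$, so by the similarity scaling of $\scontr_{\zeta}$ (which contracts distances by the factor $\sratio_{\zeta}$) the separation between $\scontr_{\om}\sset$ and $\scontr_{\omneu}\sset$ is bounded below by $\sratio_{\zeta}\cdot\met(\scontr_i\sset,\scontr_j\sset)\geq 2\ssc\,\sratio_{\zeta}$, using the definition of $\ssc$. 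Thus it suffices to ensure that the two parallel neighbourhoods of radius $\e/\low$ do not meet, for which it is enough that $2\e/\low<2\ssc\,\sratio_{\zeta}$, i.e.\ $\e<\low\,\ssc\,\sratio_{\zeta}$.

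The key point is therefore to obtain a lower bound on $\sratio_{\zeta}$ from the defining property of $\codeepsde$. Since $\zeta i$ is a (not necessarily proper) prefix of $\om$ and $\om\in\codeepsde$, the word $\zeta i$ is an initial segment of $\om$ of length at most $n(\om)$; in particular $\sratio_{\zeta i}\geq\sratio_{\om\vert_{n(\om)-1}}\cdot\sratio_{\min}>\bepsde\cdot\sratio_{\min}$ when $\zeta i$ is a proper prefix, and more carefully one sees in all cases that $\sratio_{\zeta}>\bepsde\,\sratio_{\min}/\sratio_i\geq\bepsde\,\sratio_{\min}$ is not quite the cleanest route; instead I would argue that since $\zeta i$ has length at most $n(\om)$ and $\sratio_{\om\vert_{n(\om)-1}}>\bepsde$, we get $\sratio_{\zeta}\geq\sratio_{\zeta i}/\sratio_{\min}$ is the wrong direction, so the correct bound is $\sratio_{\zeta}>\bepsde\,\sratio_{\min}$ via $\sratio_{\zeta}\geq\sratio_{\om\vert_{n(\om)-1}}>\bepsde$ whenever $\zeta$ has length $\le n(\om)-1$, together with the analogous bound from $\omneu$. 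Combining, $\sratio_{\zeta}>\bepsde\,\sratio_{\min}$, so the required condition becomes
\[
  \e<\low\,\ssc\,\sratio_{\min}\,\bepsde=\low\,\ssc\,\sratio_{\min}\left(\left(\tfrac{\de\low}{\Lip}\right)^{\nf{1}{\holderexp}}-2\tfrac{\e}{\low}\right).
\]
Solving this inequality for $\e$ and collecting the $\e$-terms on one side yields exactly the threshold $\e_0(\de)=\low\,\sratio_{\min}\left(\tfrac{\de\low}{\Lip}\right)^{\nf{1}{\holderexp}}\ssc/(1+2\ssc\,\sratio_{\min})$ appearing in the statement, which confirms that the stated constant is the sharp outcome of this computation.

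The main obstacle I anticipate is the careful bookkeeping of the common prefix: I must verify the lower bound $\sratio_{\zeta}>\bepsde\,\sratio_{\min}$ uniformly, handling both the case where $\zeta$ is a proper prefix of $\om$ (so that $\sratio_{\zeta}\ge\sratio_{\om\vert_{n(\om)-1}}>\bepsde$ directly) and the degenerate case where $\zeta i=\om$ itself (so that $\zeta=\om\vert_{n(\om)-1}$ and $\sratio_{\zeta}>\bepsde$ gives an even better bound). In every case $\sratio_{\zeta}\geq\min(\sratio_{\om\vert_{n(\om)-1}},\sratio_{\omneu\vert_{n(\omneu)-1}})>\bepsde$, which is in fact stronger than $\bepsde\,\sratio_{\min}$; I would use the weaker multiplicative bound only if needed to absorb the comparison between $\zeta$ and the full words. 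Once this lower bound is secured, the remaining steps — applying $\met(\scontr_{\om}\sset,\scontr_{\omneu}\sset)\geq 2\ssc\,\sratio_{\zeta}$ and rearranging the neighbourhood-disjointness inequality — are purely algebraic and reduce to verifying that $\e<\e_0(\de)$ implies $2\e/\low<2\ssc\,\sratio_{\zeta}$.
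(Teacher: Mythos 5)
Your argument is correct and follows essentially the same route as the paper: both reduce the claim to the separation estimate $\met(\scontr_{\om}\sset,\scontr_{\omneu}\sset)\geq 2\ssc\,\sratio_{\min}\,\bepsde$ (the paper phrases this via the minimal contraction ratio $\sratio_{\om'}$ over the finite set $\codeepsde$, you via the longest common prefix $\zeta$, which moreover makes explicit why the bound holds) and then rearrange $2\e/\low<2\ssc\,\sratio_{\min}\,\bepsde$ into exactly $\e<\e_0(\de)$. The one point worth making explicit is the antichain property of $\codeepsde$ (no element is a prefix of another, which follows from $\sratio_{\om\vert_{n(\om)-1}}>\bepsde\geq\sratio_{\omneu}$), since this is what licenses writing $\om=\zeta i\tilde{\om}$, $\omneu=\zeta j\tilde{\omneu}$ with $i\neq j$.
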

\begin{proof}
  Note that the cardinality of $\codeepsde$ is finite. Therefore, there exists $\om'\in\codeepsde$ satisfying $\sratio_{\om'}\leq\sratio_{\om}$ for all $\om\in\codeepsde$.
  Hence, for $\om\neq\omneu\in\codeepsde$ we have that
  \begin{align*}
    \met(\scontr_{\om}\sset,\scontr_{\omneu}\sset)
    &\geq \sratio_{\om'}\cdot 2\ssc
    \geq \sratio_{\min}\bepsde\cdot 2\ssc
    =\sratio_{\min}\left(\left(\frac{\de \low}{\Lip}\right)^{\nf{1}{\holderexp}}-2\frac{\e}{\low}\right)\cdot 2\ssc\\
    &>\sratio_{\min}\left(\frac{(1+2\ssc\sratio_{\min})}{\low\sratio_{\min}\ssc}\e-2\frac{\e}{\low}\right)\cdot 2\ssc
    =2\frac{\e}{\low},
  \end{align*}
	which implies the assertion.
\end{proof}

\begin{proof}[Proof of Theorem \ref{thm:locMinkcresult}(\ref{locexistence})]
	That the existence of $\Mink(\sset,\cdot)$ (resp. $\Mink(\cset,\cdot)$) implies the existence of $\tilde{\Mink}(\sset,\cdot)$ (resp. $\tilde{\Mink}(\cset,\cdot)$) can be easily seen, since $\tilde{\Mink}(\sset,\cdot)$ (resp. $\tilde{\Mink}(\cset,\cdot)$) is the Ces\`aro-average of $\Mink(\sset,\cdot)$ (resp. $\Mink(\cset,\cdot)$).
	Thus, it only remains to show that 
	\begin{equation}\label{eq:pushfwd}
		\wlim_{\e\to 0}\e^{\hdim-\edim}\leb(\cset_{\e}\cap\cdot)=\meas(\cdot),
	\end{equation}
	where $\meas$ denotes the measure given by 
	\begin{equation*}
		\frac{\textup{d}\meas}{\textup{d}\left(\map_{\star}\Mink(\sset,\cdot)\right)}=\lvert\map'\circ\map^{-1}\rvert^{\hdim}.
	\end{equation*}
	By the Portmanteau Theorem, this is equivalent to the following. For every sequence $(\e_n)_{n\in\mathbb N}$ of positive real numbers converging to 0 we have that
  \begin{enumerate}[label={(\alph*)}]
  \item $\displaystyle{\lim_{n\to\infty}\e_n^{\hdim-\edim}\leb(\cset_{\e_n}\cap\mathbb R^{\edim})=\meas(\mathbb R^{\edim})}$ and\label{condi}
  \item for every closed set $A\subseteq\mathbb R^{\edim}$,
    \[\limsup_{n\to\infty}\e_n^{\hdim-\edim}\leb(\cset_{\e_n}\cap A)\leq\meas(A).\]\label{condii}
  \end{enumerate}
  We start by showing Condition \ref{condii}. Let $(\e_n)_{n\in\mathbb N}$ be an arbitrary sequence of positive real numbers converging to 0 and fix a closed set $A\subseteq\mathbb R^{\edim}$. Fix $\de>0$ and set 
	\begin{equation}\label{eq:tildee}
		\tilde{\e}\defeq\min\left\{\e_0(\de),\frac{\low}{2}\left(\frac{\de\low}{\Lip}\right)^{1/\alpha}\right\},
	\end{equation}
	where $\e_0(\de)$ is defined as in Lemma \ref{lem:disj}. Choose $n_0(\de)\in\mathbb N$ sufficiently large that for all $n\geq n_0(\de)$ we have $\e_n<\tilde{\e}$. From here on, assume that $n\geq n_0(\de)$. For $\om\in\ABC^*$ and $\e\geq 0$ define
	\[
		\gommin(\e)\defeq\min\left\{\lvert\map'(x)\rvert\mid x\in(\scontr_{\om}\sset)_{\e/\low}\right\}.
	\]
	Applying Lemmas \ref{lem:bdp} and \ref{lem:union} yields the following.
  \begin{align}
    \leb(\cset_{\e_n}\cap A)
    &\leq \sum_{\om\in\codes(\e_n,\de)}\leb\left((\map\scontr_{\om}\sset)_{\e_n}\cap A\right)\nonumber\\
		&\leq \sum_{\om\in\codes(\e_n,\de)}\leb\left(\map((\scontr_{\om}\sset)_{\e_n/\gommin(\e_n)}\cap\map^{-1} A)\right)\nonumber\\
    &\leq\sum_{\om\in\codes(\e_n,\de)}\leb\left((\scontr_{\om}\sset)_{\e_n/\gommin(\e_n)}\cap\map^{-1} A\right)\cdot\gommin(\e_n)^{\edim}(1+\de)^{\edim}.\label{eq:loc1}
  \end{align}
	To bound this latter quantity, let us focus on the term $\leb((\scontr_{\om}\sset)_{\e_n/\gommin(\e_n)}\cap\map^{-1}A)$ for $\om\in\codes(\e_n,\de)$.
	Set 
	\[
		D\defeq\low\cdot\min\{\met(\scontr_{\om}\sset,\scontr_{\omneu}\sset)/2\mid\om\neq\omneu\in\codes(\tilde{\e},\de)\}
	\]
	and observe that Lemma \ref{lem:disj} implies that $\e_n/\gommin(\e_n)<D/\low$. Thus, for all $\om\in\codes(\e_n,\de)$, we have that
	\begin{equation}\label{eq:D}
		(\scontr_{\om}\sset)_{\e_n/\gommin(\e_n)}=\sset_{\e_n/\gommin(\e_n)}\cap(\scontr_{\om}\sset)_{D/\low}.
	\end{equation}
	Moreover, $B\defeq\map^{-1}A\cap(\scontr_{\om}\sset)_{D/\low}$ is closed since $\map$ is a diffeomorphism.
  By the hypotheses, $\Mink(\sset,\cdot)\defeq\wlim_{n\to\infty}\e_n^{\hdim-\edim}\leb(\sset_{\e_n}\cap\cdot)$ exists, and so the Portmanteau Theorem and \cref{eq:D} imply that
  \begin{align*}
   &\limsup_{n\to\infty}\left(\frac{\e_n}{\gommin(\e_n)}\right)^{\hdim-\edim}\leb((\scontr_{\om}\sset)_{\e_n/\gommin(\e_n)}\cap\map^{-1}A)\\
	=\ &\limsup_{n\to\infty}\left(\frac{\e_n}{\gommin(\e_n)}\right)^{\hdim-\edim}\leb(\sset_{\e_n/\gommin(\e_n)}\cap B)\\
	\leq\ &\Mink(\sset,B).
  \end{align*}
  Hence, for all $\kappa>0$ there exists an $\overline{n}\in\mathbb N$ such that for all $n\geq \overline{n}$ we have that
  \begin{equation}\label{eq:supkappa}
  \sup_{k\geq n}\left(\frac{\e_k}{\gommin(\e_k)}\right)^{\hdim-\edim}\leb((\scontr_{\om}\sset)_{\e_k/\gommin(\e_k)}\cap \map^{-1}A)\leq\Mink(\sset,B)+\kappa.
  \end{equation}
   From \cref{eq:loc1,eq:supkappa} we now obtain that
  \begin{align*}
    &\e_n^{\hdim-\edim}\leb(\cset_{\e_n}\cap A)\\
    \leq\ & \sum_{\om\in\codes(\e_n,\de)}\left(\frac{\e_n}{\gommin(\e_n)}\right)^{\hdim-\edim}\leb((\scontr_{\om}\sset)_{\e_n/\gommin(\e_n)}\cap\map^{-1}A)\cdot\gommin(\e_n)^{\hdim}(1+\de)^{\edim}\\
    \leq\ & \sum_{\om\in\codes(\e_n,\de)}\left(\Mink(\sset,\map^{-1}A\cap (\scontr_{\om}\sset)_{D/\low})+\kappa\right)\cdot\gommin(\e_n)^{\hdim}(1+\de)^{\edim}.
  \end{align*}
  By Theorems \ref{thm:Gatzouras} and \ref{thm:Winter} we know that $\Mink(\sset,\cdot)=\Mink(\sset)\mu_{\hdim}(\cdot)$.
Thus, the support of $\Mink(\sset,\cdot)$ is $\sset$ and the definition of $D$ implies that $\Mink(\sset,\map^{-1}A\cap (\scontr_{\om}\sset)_{D/\low})=\Mink(\sset,\map^{-1}A\cap \scontr_{\om}\sset)$. Therefore, using the bounded distortion lemma (Lemma \ref{lem:bdp}) we conclude the following.
\begin{align*}
  &\e_n^{\hdim-\edim}\leb(\cset_{\e_n}\cap A)\\
  \leq\ & \int_{\map^{-1}A}\lvert\map'\rvert^{\hdim}\textup{d}\Mink(\sset,\cdot)(1+\de)^{\edim}+ \sum_{\om\in\codes(\e_n,\de)}\gommin(\e_n)^{\hdim}\kappa(1+\de)^{\edim}\\
  \leq\ &\int_{A}\lvert\map'\circ\map^{-1}\rvert^{\hdim}\textup{d}\left(\map_{\star}\Mink(\sset,\cdot)\right)(1+\de)^{\edim}+ \sum_{\om\in\codes(0,\de)}\gommin(0)^{\hdim}\kappa(1+\de)^{\edim+\hdim}.
\end{align*}
Finally, since the expression in the last line does not depend on $n$, we can take the limits as $\kappa\to 0$ and $\de\to 0$ to obtain
\begin{equation*}
  \limsup_{n\to\infty}\e_n^{\hdim-\edim}\leb(\cset_{\e_n}\cap A)
  \leq \int_A \lvert\map'\circ\map^{-1}\rvert^{\hdim}\textup{d}\left(\map_{\star}\Mink(\sset,\cdot)\right)=\meas(A).
\end{equation*}
This shows that Condition \ref{condii} is satisfied.\\ 

Now that Condition \ref{condii} is verified, to obtain Condition \ref{condi} it suffices to show that for every sequence $(\e_n)_{n\in\mathbb N}$ of positive real numbers converging to 0 we have $\liminf_{n\to\infty}\e_n^{\hdim-\edim}\leb(\cset_{\e_n})\geq\meas(\mathbb R^{\edim})$. 
To that end, fix such a sequence $(\e_n)_{n\in\mathbb N}$ and an arbitrary $\de>0$. By our hypotheses, $\lim_{\e\to 0}\e^{\hdim-\edim}\leb(\sset_{\e})=\Mink(\sset)$ and so for all $\kappa>0$ there exists an $\overline{n}\in\mathbb N$ such that for all $n\geq \overline{n}$ and all $\om\in\codes(\e_n,\de)$ we have that
\begin{equation}\label{eq:MinkBruch}
	\left\lvert\left(\frac{\e_n}{\sratio_{\om}\gommin(\e_n)(1+\de)}\right)^{\hdim-\edim}\leb(\sset_{\e_n/(\sratio_{\om}\gommin(\e_n)(1+\de))})-\Mink(\sset)\right\rvert<\kappa.
\end{equation}
Recall the definition of $\tilde{\e}$ from \cref{eq:tildee} and choose $n_0(\de)\geq \overline{n}$ sufficiently large that $\e_n<\tilde{\e}$ for all $n\geq n_0(\de)$. Assume that $n\geq n_0(\de)$ from here on. Lemmas \ref{lem:union} and \ref{lem:disj} together with the inequality given in \cref{eq:MinkBruch} imply that
\begin{align*}
  \e_n^{\hdim-\edim}\leb(\cset_{\e_n})
  &= \e_n^{\hdim-\edim}\sum_{\om\in\codes(\e_n,\de)}\leb((\map\scontr_{\om}\sset)_{\e_n})\\
  &\geq \e_n^{\hdim-\edim}\sum_{\om\in\codes(\e_n,\de)}\gommin(\e_n)^{\edim}\sratio_{\om}^{\edim}\leb(\sset_{\nf{\e_n}{(\sratio_{\om}\gommin(\e_n)(1+\de))}})\\
  &\geq \sum_{\om\in\codes(\e_n,\de)}\gommin(\e_n)^{\hdim}\sratio_{\om}^{\hdim} (1+\de)^{\hdim-\edim}\left(\Mink(\sset)-\kappa\right)\\
  &\geq \sum_{\om\in\codes(0,\de)}\gommin(0)^{\hdim}\sratio_{\om}^{\hdim} (1+\de)^{-\edim}\left(\Mink(\sset)-\kappa\right),
\end{align*}
where the last inequality is a consequence of Lemma \ref{lem:bdp}. Since $\de$ was arbitrarily chosen, the above inequality holds for all $\de>0$.
Having $\lim_{\de\to 0} \sum_{\om\in\codes(0,\de)}\gommin(0)^{\hdim}\sratio_{\om}^{\hdim}=\int_{\sset}|\map'|^{\hdim}\,\text{d}\mu_{\hdim}$, we conclude by taking the limit as $\de$ tends to $0$ that
\begin{equation*}
  \liminf_{n\to\infty} \e_n^{\hdim-\edim}\leb(\cset_{\e_n})
  \geq \int_{\sset}|\map'|^{\hdim}\,\text{d}\mu_{\hdim}\,\cdot\,\left(\Mink(\sset)-\kappa\right) \qquad \text{for all}\ \kappa>0.
\end{equation*} 
Hence,
\begin{equation*}
  \liminf_{n\to\infty}\e_n^{\hdim-\edim}\leb(\cset_{\e_n})
  \geq \int_{\sset}|\map'|^{\hdim}\,\text{d}\mu_{\hdim}\,\cdot\,\Mink(\sset).
\end{equation*}
\end{proof}

Our next aim is to prove Theorem \ref{thm:locMinkcresult}(\ref{locaverage}). The first step in this direction is the following definition. 
An \emph{intersection stable generator} of $\Borel$ is defined to be a collection of sets $\mathcal E\subset\Borel$ such that the smallest $\sigma$-algebra containing $\mathcal E$ coincides with $\Borel$ and such that the intersection of any two elements of $\mathcal E$ again is an element of $\mathcal E$. 
In the proof of Theorem \ref{thm:locMinkcresult}(\ref{locaverage}) we are going to use the fact that two Borel measures which coincide on an intersection stable generator of the Borel $\sigma$-algebra $\Borel$ coincide on $\Borel$. 
The intersection stable generator we use is constructed as follows. First, recall that the SSC implies the OSC and that the OSC was proven to be equivalent to the strong open set condition (SOSC) for self-similar subsets of $\mathbb R^{\edim}$ in \cite{Schief}. An iterated function system $\Scontr\defeq\{\scontr_1,\ldots,\scontr_N\}$ with invariant set $\sset$ satisfies the SOSC if there exists a nonempty bounded open set $\opens\subseteq\mathbb R^{\edim}$ such that $\Scontr(\opens)\defeq\bigcup_{i=1}^N\scontr_i\opens\subseteq\opens$, $\scontr_i\opens\cap\scontr_j\opens=\emptyset$ for $i\neq j\in\{1,\ldots,N\}$ and $O\cap K\neq\emptyset$. Such a set $\opens$ satisfies $\sset\subseteq\overline{O}$ and shall be fixed from now on. Motivated by Section 6.1 in \cite{Winter_thesis} we define
\begin{align*}
	\mathcal{E}_{\cset}&\defeq\{\map\scontr_{\om}\opens\mid\om\in\ABC^*\}\cup\mathcal{K}_{\cset},\quad\text{where}\\
  \mathcal{K}_{\cset}&\defeq\{C\in\Borel\mid\exists n\in\mathbb N\colon C\subseteq\mathbb R^{\edim}\setminus\bigcup_{\om\in\ABC^n}\map\scontr_{\om}\opens\}.
\end{align*} 
\begin{lemma}\label{lem:intersectiongenerator}
  $\mathcal{E}_{\cset}$ is an intersection stable generator for the Borel $\sigma$-algebra $\Borel$. 
\end{lemma}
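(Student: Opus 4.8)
The plan is to verify the two defining properties of an intersection stable generator separately, after first recording the combinatorial structure of the family $\{\map\scontr_{\om}\opens\mid\om\in\ABC^*\}$. Since $\map$ is a homeomorphism, and since $\scontr_j\opens\cap\scontr_{j'}\opens=\emptyset$ for $j\neq j'$ while $\scontr_i\opens\subseteq\opens$ for every $i$, the sets $\scontr_{\om}\opens$ -- and hence their $\map$-images -- satisfy the usual tree property: for $\om,\omneu\in\ABC^*$ the set $\map\scontr_{\om}\opens\cap\map\scontr_{\omneu}\opens$ equals $\map\scontr_{\om}\opens$, equals $\map\scontr_{\omneu}\opens$, or is empty, according as $\om$ extends $\omneu$, $\omneu$ extends $\om$, or the two words are incomparable. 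To see this I would let $\tau$ be the longest common prefix, apply the injective map $\scontr_{\tau}$ to the disjointness $\scontr_i\opens\cap\scontr_j\opens=\emptyset$ of the first differing symbols, and then apply the injective map $\map$. I would also record that $\diam(\map\scontr_{\om}\opens)\to 0$ as $n(\om)\to\infty$, which follows from $\diam(\scontr_{\om}\opens)=\sratio_{\om}\diam(\opens)\leq(\max_i\sratio_i)^{n(\om)}\diam(\opens)$ together with the fact that the deep cylinders lie in $\sset_{1/2}$, on which $\map$ is Lipschitz.

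For intersection stability I would exploit that $\mathcal K_{\cset}$ is \emph{hereditary}: if $C\in\mathcal K_{\cset}$ is witnessed by $n$ and $C'\subseteq C$ is Borel, then $C'\subseteq\mathbb R^{\edim}\setminus\bigcup_{\om\in\ABC^n}\map\scontr_{\om}\opens$ as well, so $C'\in\mathcal K_{\cset}$. Consequently the intersection of \emph{any} element of $\mathcal E_{\cset}$ with an element of $\mathcal K_{\cset}$ is a Borel subset of that $\mathcal K_{\cset}$-element and hence again lies in $\mathcal K_{\cset}$; this disposes of every case except the intersection of two sets of the form $\map\scontr_{\om}\opens$, which by the tree property is again of that form or is empty (and $\emptyset\in\mathcal K_{\cset}$). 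Thus $\mathcal E_{\cset}$ is closed under finite intersections. Here I would also note that the nesting $\bigcup_i\scontr_i\opens\subseteq\opens$ makes the sets $\Gamma_n\defeq\bigcup_{\om\in\ABC^n}\map\scontr_{\om}\opens$ decreasing in $n$.

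For the generating property, the inclusion $\sigma(\mathcal E_{\cset})\subseteq\Borel$ is clear, as every generator is Borel (the sets $\map\scontr_{\om}\opens$ are open, being homeomorphic images of open sets). For the reverse inclusion it suffices to show that every open $U\subseteq\mathbb R^{\edim}$ lies in $\sigma(\mathcal E_{\cset})$, and I would split $U=\bigl(U\cap\bigcap_m\Gamma_m\bigr)\cup\bigl(U\setminus\bigcap_m\Gamma_m\bigr)$. The second piece equals $\bigcup_m\bigl(U\cap(\mathbb R^{\edim}\setminus\Gamma_m)\bigr)$, a countable union of Borel subsets of the witness sets $\mathbb R^{\edim}\setminus\Gamma_m$, hence a countable union of elements of $\mathcal K_{\cset}$, so it lies in $\sigma(\mathcal E_{\cset})$. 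For the first piece, the key point is that the countable family $\{\map\scontr_{\om}\opens\cap\bigcap_m\Gamma_m\mid\om\in\ABC^*\}$ is a basis for the subspace topology on $\bigcap_m\Gamma_m$: given $x\in\bigcap_m\Gamma_m$, the tree property lets me select a nested sequence of cylinders through $x$, that is an $\om\in\codes$ with $x\in\map\scontr_{\om\vert_n}\opens$ for all $n$, and since these cylinders have diameters tending to $0$ they shrink inside any prescribed ball about $x$. A countable basis generates the Borel $\sigma$-algebra of a second countable subspace, and since $\bigcap_m\Gamma_m\in\sigma(\mathcal E_{\cset})$ and each $\map\scontr_{\om}\opens\in\sigma(\mathcal E_{\cset})$, this yields $U\cap\bigcap_m\Gamma_m\in\sigma(\mathcal E_{\cset})$. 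Combining the two pieces gives $U\in\sigma(\mathcal E_{\cset})$, whence $\Borel\subseteq\sigma(\mathcal E_{\cset})$.

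I expect the main obstacle to be the generating step, specifically verifying that the cylinder sets restricted to the fractal core $\bigcap_m\Gamma_m$ form a topological basis; this is exactly where both the separation (the SSC, via the disjoint-or-nested dichotomy) and the contraction (via vanishing diameters) are needed, whereas the portion of $\mathbb R^{\edim}$ away from $\cset$ is handled immediately by the hereditary nature of $\mathcal K_{\cset}$. A minor technical point I would address is that $\map\scontr_{\om}\opens$ must be defined for every $\om\in\ABC^*$, which requires $\overline{\opens}\subseteq\mathcal U$; this can be arranged when fixing the SOSC set $\opens$, and all diameter estimates use only that the deep cylinders lie in $\sset_{1/2}\subseteq\mathcal U$.
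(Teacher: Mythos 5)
Your proof is correct, but it takes a genuinely different route from the paper. The paper's own argument is a transport argument: it treats intersection stability as immediate, and for the generating property it cites Lemma 6.1.1 of \cite{Winter_thesis} (every open subset of $\mathbb R^{\edim}$ is a countable union of elements of $\mathcal E_{\sset}$), writes $\map^{-1}U=\bigcup_i A_i$ with $A_i\in\mathcal E_{\sset}$, and checks that each $\map A_i$ lies in $\mathcal E_{\cset}$. You instead prove everything from scratch in the image space: intersection stability via the nested-or-disjoint tree structure of the sets $\map\scontr_{\om}\opens$ together with the hereditary property of $\mathcal K_{\cset}$, and the generating property by splitting an open set $U$ into its trace on the core $\bigcap_m\Gamma_m$ (where the cylinders form a countable basis, using the SSC for disjointness and the contraction for vanishing diameters) and its complement (absorbed into $\mathcal K_{\cset}$). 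In effect you re-derive Winter's lemma directly for $\mathcal E_{\cset}$. What the paper's route buys is brevity; what yours buys is self-containedness, and it also quietly sidesteps a point the paper glosses over, namely that $\map A_i$ is only defined when $A_i\subseteq\mathcal U$ (elements of $\mathcal K_{\sset}$ need not lie in the domain of $\map$, and $U$ need not lie in $\map(\mathcal U)$), since you never apply $\map$ to anything other than the sets $\scontr_{\om}\opens$. Your closing caveat that $\opens$ should be chosen with $\overline{\opens}\subseteq\mathcal U$ (e.g.\ $\opens=\sset_{\eta}^{\circ}$ for small $\eta$ under the SSC) is a legitimate and easily arranged technicality that applies equally to the paper's formulation.
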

\begin{proof}
  It can easily be seen that $\mathcal{E}_{\cset}$ is intersection stable and that $\mathcal{E}_{\cset}\subseteq\Borel$. Thus, what remains to show is that $\Borel\subseteq\sigma(\mathcal{E}_{\cset})$, where $\sigma(\mathcal{E}_{\cset})$ denotes the $\sigma$-algebra generated by $\mathcal{E}_{\cset}$. For this inclusion we are going to prove that every open set $U\subseteq\mathbb R^{\edim}$ is contained in the $\sigma$-algebra $\sigma(\mathcal{E}_{\cset})$.
In the proof of Lemma 6.1.1 in \cite{Winter_thesis} it is shown that every open set in $\mathbb R^{\edim}$ is a countable union of sets in 
\begin{align*}
	\mathcal{E}_{\sset}&\defeq\{\scontr_{\om}\opens\mid\om\in\ABC^*\}\cup\mathcal{K}_{\sset},\quad\text{where}\\
	\mathcal{K}_{\sset}&\defeq\{C\in\Borel\mid\exists n\in\mathbb N\colon C\subseteq\mathbb R^{\edim}\setminus\bigcup_{\om\in\ABC^n}\scontr_{\om}\opens\}.
\end{align*}
Thus, there exist sets $A_i\in\mathcal{E}_{\sset}$, $i\in\mathbb N$, such that $\map^{-1}U=\bigcup_{i=1}^{\infty}A_i$.
If $A_i\in\mathcal{K}_{\sset}$, then there exists an $n\in\mathbb N$ such that $A_i\cap\bigcup_{\om\in\ABC^n}\scontr_{\om}\opens=\emptyset$. This implies that $\map A_i\cap\bigcup_{\om\in\ABC^n}\map\scontr_{\om}\opens=\emptyset$ and hence we have that $\map A_i\in\mathcal{K}_{\cset}$.
If, on the other hand, $A_i\in\{\scontr_{\om}\opens\mid\om\in\ABC^*\}$, then $\map A_i\in\{\map\scontr_{\om}\opens\mid\om\in\ABC^*\}$. Therefore, $\map A_i\in{\mathcal{E}}_{\cset}$ for all $i\in\mathbb N$ and $U=\bigcup_{i=1}^{\infty}\map A_i$.
\end{proof}

For the proof of Theorem \ref{thm:locMinkcresult}(\ref{locaverage}) we also require the following lemma, which is a weaker version of Lemma 5.2.1 in \cite{Winter_thesis}.
\begin{lemma}[Winter]\label{lem:inner}
  There exist constants $\conste,\constz,\constd>0$ such that for all $\e,\s$ satisfying $0<\e\leq\s\leq\constd$ we have
\[
\leb(\sset_{\e}\cap (O^c)_{\s})\leq\conste\e^{\edim-\hdim}\s^{\constz}.
\]
\end{lemma}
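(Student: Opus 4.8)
The plan is to work at scale $\s$ with the self-similar decomposition of $\sset$ and to reduce the statement to the assertion that only a power-small proportion of the cylinders of size $\approx\s$ can approach $\opens^c$. This last point is the real content, and I would extract it from the appearance of a fixed ``good block'' in the coding.

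\textbf{Step 1 (reduction to a counting estimate).} I would fix the cut-set
\[
W_\s\defeq\{\om\in\ABC^*\mid\sratio_\om\le\s<\sratio_{\om\vert_{n(\om)-1}}\},
\]
so that $\sset=\bigcup_{\om\in W_\s}\scontr_\om\sset$ and $\sratio_\om\in(\sratio_{\min}\s,\s]$ for every $\om\in W_\s$. Since $\e\le\s$, a piece $(\scontr_\om\sset)_\e$ can meet $(\opens^c)_\s$ only when $\met(\scontr_\om\sset,\opens^c)\le\e+\s\le2\s$; write $\mathcal B_\s$ for the set of such $\om\in W_\s$. Combining the scaling identity $(\scontr_\om\sset)_\e=\scontr_\om(\sset_{\e/\sratio_\om})$ with the classical uniform bound $\leb(\sset_t)\le C_0\,t^{\edim-\hdim}$ (finiteness of the upper Minkowski content of a self-similar set under the OSC, applied for $t=\e/\sratio_\om\le\sratio_{\min}^{-1}$), I obtain
\[
\leb(\sset_\e\cap(\opens^c)_\s)\le\sum_{\om\in\mathcal B_\s}\leb\big((\scontr_\om\sset)_\e\big)\le C_0\,\e^{\edim-\hdim}\sum_{\om\in\mathcal B_\s}\sratio_\om^{\hdim}.
\]
It therefore suffices to prove $\sum_{\om\in\mathcal B_\s}\sratio_\om^{\hdim}\le C_1\s^{\constz}$ for some $\constz>0$.

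\textbf{Step 2 (the good block; main obstacle).} Because $\Scontr$ satisfies the SOSC we have $\sset\cap\opens\neq\emptyset$. Fixing $z\in\sset\cap\opens$ and $\eta>0$ with $\overline{B(z,\eta)}\subseteq\opens$, and taking a prefix $w\in\ABC^*$ of a code of $z$ long enough that $\diam(\scontr_w\overline\opens)<\eta$, I obtain a fixed word $w$ with $\scontr_w\overline\opens\subseteq\opens$ and $\gamma\defeq\met(\scontr_w\overline\opens,\opens^c)>0$ (and we may assume $\gamma\le1$, since shrinking $\gamma$ only weakens the inequality below). The crux is that an occurrence of $w$ keeps the cylinder uniformly away from $\opens^c$: if $\om_{j+1}\cdots\om_{j+n(w)}=w$, then $\scontr_\om\overline\opens\subseteq\scontr_{\om\vert_j}(\scontr_w\overline\opens)$, and since $\scontr_{\om\vert_j}\opens\subseteq\opens$ gives $\opens^c\subseteq\scontr_{\om\vert_j}(\opens^c)$, the similarity $\scontr_{\om\vert_j}$ (ratio $\sratio_{\om\vert_j}$) yields
\[
\met(\scontr_\om\sset,\opens^c)\ \ge\ \met\big(\scontr_\om\overline\opens,\scontr_{\om\vert_j}(\opens^c)\big)\ \ge\ \sratio_{\om\vert_j}\,\gamma.
\]
Hence $\om\in\mathcal B_\s$ forbids any occurrence of $w$ at a position $j$ with $\sratio_{\om\vert_j}>\tau\defeq2\s/\gamma$. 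I expect this nesting implication -- upgrading proximity to $\opens^c$ into avoidance of a fixed block, by transporting the global gap $\gamma$ through $\scontr_{\om\vert_j}$ and using $\opens^c\subseteq\scontr_{\om\vert_j}(\opens^c)$ -- to be the decisive and most delicate point, as it is the only step that converts a purely local (level-$j$) separation into separation from $\opens^c$ itself.

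\textbf{Step 3 (block-avoidance counting and conclusion).} With $\constd<\gamma/2$, so that $2\s\le\tau<1$ for $\s\le\constd$, every $\om\in\mathcal B_\s$ has a prefix $\om\vert_m$ of a fixed length $m\asymp\log(1/\s)/\log(1/\sratio_{\min})$ containing no occurrence of $w$ (every position inside $\om\vert_m$ sits above scale $\tau$). Grouping by this prefix and using the martingale identity $\sum_{\om\in W_\s,\ \om\vert_m=p}\sratio_\om^{\hdim}=\sratio_p^{\hdim}$ (a consequence of $\sum_{i=1}^N\sratio_i^{\hdim}=1$), I reduce to
\[
\sum_{\om\in\mathcal B_\s}\sratio_\om^{\hdim}\ \le\ \sum_{p\in\ABC^m,\ p\text{ avoids }w}\sratio_p^{\hdim}.
\]
Splitting such a $p$ into $\lfloor m/n(w)\rfloor$ consecutive blocks of length $n(w)$, none of which equals $w$, multiplicativity of $\sratio_\cdot^{\hdim}$ gives the bound $(1-\sratio_w^{\hdim})^{\lfloor m/n(w)\rfloor}$. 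Since $m\asymp\log(1/\s)$, this is at most $C_1\s^{\constz}$ for a suitable $\constz>0$ depending only on $\sratio_w^{\hdim}$, $n(w)$ and $\sratio_{\min}$. Feeding this back into Step 1 produces constants $\conste,\constz,\constd>0$ with $\leb(\sset_\e\cap(\opens^c)_\s)\le\conste\,\e^{\edim-\hdim}\s^{\constz}$ for all $0<\e\le\s\le\constd$, as required; the remaining verifications (the uniform Minkowski bound, the cut-set mass identity, and the length estimate for $m$) are routine.
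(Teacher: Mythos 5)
Your argument is correct, but note that the paper does not prove this lemma at all: it is imported verbatim (as a weakened special case) from Lemma 5.2.1 of Winter's thesis, so there is no in-paper proof to compare against. What you have produced is a self-contained derivation along the standard route for such ``boundary smallness'' estimates under the SOSC: cut-set decomposition at scale $\s$, the uniform bound $\leb(\sset_t)\leq C_0t^{\edim-\hdim}$ to extract the factor $\e^{\edim-\hdim}$, and then the good-block argument (an occurrence of the fixed word $w$ with $\scontr_w\overline{\opens}$ compactly inside $\opens$ forces $\met(\scontr_{\om}\sset,\opens^c)\geq\sratio_{\om\vert_j}\gamma$, using $\opens^c\subseteq\scontr_{\om\vert_j}(\opens^c)$) combined with the submultiplicative count $(1-\sratio_w^{\hdim})^{\lfloor m/n(w)\rfloor}$ of $w$-avoiding prefixes. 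All the key steps check out; this is essentially the mechanism behind Winter's proof as well, so you gain transparency and self-containedness rather than a genuinely new method. One detail you should pin down when writing this up: the prefix length $m$ must simultaneously satisfy $\sratio_{\min}^{m-n(w)}>\tau$ (so that every admissible occurrence position in $\om\vert_m$ sits above scale $\tau$) \emph{and} $m\leq n(\om)$ for every $\om\in W_{\s}$ (so that the prefix exists and the martingale identity $\sum_{\om\in W_\s,\,\om\vert_m=p}\sratio_{\om}^{\hdim}=\sratio_p^{\hdim}$ is legitimate); the second constraint can be tighter than the first when $\gamma$ is large, so one should take $m$ to be the minimum of the two admissible values, which still yields $m\geq c\ln(1/\s)-C$ and hence the power $\s^{\constz}$.
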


\begin{proof}[Proof of Theorem \ref{thm:locMinkcresult}(\ref{locaverage})]
  By Theorem \ref{thm:Winter} we know that if $\Scontr$ is nonlattice, then $\Mink(\sset,\cdot)$ exists. Thus by Theorem \ref{thm:locMinkcresult}(\ref{locexistence}) also $\Mink(\cset,\cdot)$ exists and $\textup{d}\Mink(\cset,\cdot)=\lvert\map'\circ\map^{-1}\rvert^{\hdim}\textup{d}\left(\map_{\star}\Mink(\sset,\cdot)\right)$. 
	Hence, the assertion follows in the nonlattice case, since the existence of the local Minkowski content clearly implies the existence of the local average Minkowski content, $\tilde{\Mink}(\sset,\cdot)=\Mink(\sset,\cdot)$ and $\tilde{\Mink}(\cset,\cdot)=\Mink(\cset,\cdot)$. This leaves the case that $\Scontr$ is $\arith$-lattice for some $\arith>0$, which we now prove. 
	
	Under the assumption that the average Minkowski content of $\cset$ exists (which we show later), the family of finite Borel measures
  \[
  \mathcal{P}\defeq\left\{\mu_T(\cdot)\defeq\lvert\ln T\rvert^{-1}\int_T^1\e^{\hdim-\edim}\leb(\cset_{\e},\cdot)\frac{\textup{d}\e}{\e}\mid T\in(0,1)\right\}
  \]
  is tight and bounded. Let $(T_n)_{n\in\mathbb N}$ denote a sequence in $(0,1)$ converging to $0$. Then by Prohorov's Theorem, there exists a subsequence $(T_{n_k})_{k\in\mathbb N}$ and a finite Borel measure $\overline{\mu}$ depending on the sequence $(n_k)_{k\in\mathbb N}$ such that $(\mu_{T_{n_k}})_{k\in\mathbb N}$ converges weakly to $\overline{\mu}$.
	We will show that $\overline{\mu}$ coincides for every such sequence $(n_k)_{k\in\mathbb N}$ with the measure $\mu$ which is given by
\begin{equation}\label{eq:measpushfwd}
\frac{\textup{d}\mu}{\textup{d}\left(\map_{\star}\tilde{\Mink}(\sset,\cdot)\right)}=\lvert\map'\circ\map^{-1}\rvert^{\hdim}.
\end{equation}
For this we use the fact that two measures which coincide on an intersection stable generator of $\Borel$ coincide on the whole $\sigma$-algebra $\Borel$. Thus, by Lemma \ref{lem:intersectiongenerator} it remains to show that $\lim_{k\to\infty}\mu_{T_{n_k}}(A)=\mu(A)$ for every $A\in\mathcal{E}_{\cset}$ and arbitrary $(n_k)_{k\in\mathbb N}$. (This also implies that the average Minkowski content of $\cset$ exists and thus that $\mathcal{P}$ is tight and bounded.) 
However, this follows from the statement that 
\begin{equation}\label{eq:XobenXunten}
	\overline{X}(A)=\underline{X}(A)=\mu(A)
\end{equation}
for all $A\in\mathcal{E}_{\cset}$, where 
\begin{align*}
	\overline{X}(A)
  &\defeq \limsup_{T\to 0}\lvert \ln T\rvert^{-1}\int_{T}^1 \e^{\hdim-\edim}\leb(\cset_{\e}\cap A)\frac{\textup{d}\e}{\e}\quad\text{and}\\
	\underline{X}(A)
  &\defeq \liminf_{T\to 0}\lvert \ln T\rvert^{-1}\int_{T}^1 \e^{\hdim-\edim}\leb(\cset_{\e}\cap A)\frac{\textup{d}\e}{\e}.
\end{align*}
In order to demonstrate the equality in \cref{eq:XobenXunten}, let us start with the following observations.
If $\Scontr$ is $\arith$-lattice, then the function $t\mapsto (e^{-t})^{\hdim-\edim}\leb(\sset_{e^{-t}})$ converges along sequences of the form $(\arith n+x)_{n\in\mathbb N}$, where $x\in[0,\arith)$. This has been obtained in Equation (2.9) of \cite{Gatzouras} and results from renewal theory. Thus there exists a periodic function $f\colon\mathbb R^+\to\mathbb R^+$ with period $\arith$ such that for all $x\in[0,\arith)$
  \[
  \lim_{m\to\infty}(e^{-(x+m\arith)})^{\hdim-\edim}\leb(\sset_{e^{-(x+m\arith)}})=f(x).
  \]
Moreover, Equation (2.10) in \cite{Gatzouras}, which follows from Lebesgue's Dominated Convergence Theorem, states that 
    \[
    \lim_{m\to\infty}\int_{[0,\arith)}(e^{-(x+m\arith)})^{\hdim-\edim}\leb(\sset_{e^{-(x+m\arith)}})\textup{d}x
      =\int_{[0,\arith)}f(x)\textup{d}x.
    \]
  Thus, for an arbitrary $\de>0$ there exists an $M\in\mathbb N$ such that for all $m\geq M$ we have
\begin{equation}\label{ftheta}
  \left\lvert \int_{[0,\arith)}(e^{-(x+m\arith)})^{\hdim-\edim}\leb(\sset_{e^{-(x+m\arith)}})\textup{d}x-\int_{[0,\arith)}f(x)\textup{d}x\right\rvert<\de.
\end{equation}
For this $\de>0$ fix $M$ as above, take $\e_0(\de)$ as in Lemma \ref{lem:disj} and set 
\begin{align*}
	\underline{\sratio}&\defeq\min\{\sratio_{\om}\mid \om\in\codes(\e_0(\de),\de)\}\qquad\text{and}\\
	L&\defeq\max\{M-\ln(\low\underline{\sratio}),-\ln\e_0(\de), -\ln\constd\low\underline{\sratio}\},
\end{align*}
where $\constd$ is the constant from Lemma \ref{lem:inner}.
Denote by $\lfloor x\rfloor$ the integer part of $x\in\mathbb R$, that is, the largest integer which is less than or equal to $x$. Then we can reformulate the expressions $\overline{X}(A)$ and $\underline{X}(A)$ for $A\in\mathcal{E}_{\cset}$ as follows. 
\begin{align}
  \overline{X}(A)
  &=\limsup_{T\to\infty}T^{-1}\int_0^{T}(e^{-t})^{\hdim-\edim}\leb(\cset_{e^{-t}}\cap A)\textup{d}t\nonumber\\
	&=\limsup_{T\to\infty}T^{-1}\underbrace{\sum_{k=0}^{\lfloor\arith^{-1}(T-L)-1\rfloor}\int_{T-(k+1)\arith}^{T-k\arith}(e^{-t})^{\hdim-\edim}\leb(\cset_{e^{-t}}\cap A)\textup{d}t}_{\eqdef U(T,A)},\label{Xoben}
\end{align}
where the last equality follows from the fact that $t\mapsto (e^{-t})^{\hdim-\edim}\leb(\cset_{e^{-t}}\cap A)$ is continuous and thus locally integrable. Analogously, one obtains that 
\begin{equation}\label{Xunten}
  \underline{X}(A)
  =\liminf_{T\to\infty}T^{-1}U(T,A).
\end{equation}
In order to show that $\overline{X}(A)=\underline{X}(A)$ for all $A\in\mathcal E_{\cset}$, we distinguish between the cases $A\in\mathcal{E}_{\cset}\setminus\mathcal{K}_{\cset}$ and $A\in\mathcal{K}_{\cset}$.\\

\textsc{Case} 1: $A\in\mathcal{E}_{\cset}\setminus\mathcal{K}_{\cset}$.\\
In this case there exists $\nu\in\ABC^*$ such that $A=g\scontr_{\nu}\opens$. Assume that $\de$ is sufficiently small that $n(\om)\geq n(\nu)$ for all $\om\in\codes(\e_0(\de),\de)$ and define the set of words
\[
\codes_{\nu}(\e,\de)\defeq\{\om\in\codeepsde\mid[\om]\subseteq[\nu]\}
\]
for $\e\in[0,\e_0(\de)]$, where $[\om]$ denotes the $\om$-cylinder set.

In the following suppose that $T>L$.
As $e^{-T+(k+1)\arith}\leq e^{-L}\leq\e_0(\de)$ holds for every $k\in\{0,\ldots,\lfloor\arith^{-1}(T-L)-1\rfloor\}$, Lemma \ref{lem:disj} implies that $\bigcup_{\om\in\codes(e^{-T+(k+1)\arith},\de)}(\map\scontr_{\om}\sset)_{e^{-t}}$ is a disjoint union for every $k\in\{0,\ldots,\lfloor\arith^{-1}(T-L)-1\rfloor\}$ and $t\in (T-(k+1)\arith,T-k\arith]$. Therefore, for sufficiently large $T$ we obtain that
\begin{align}
  &U(T,\map\scontr_{\nu}\opens)\nonumber\\
  =\ & \sum_{k=0}^{\lfloor\arith^{-1}(T-L)-1\rfloor}\int_{T-(k+1)\arith}^{T-k\arith}(e^{-t})^{\hdim-\edim}\hspace{-3ex}\sum_{\om\in\codes(e^{-T+(k+1)\arith},\de)}\hspace{-1.5ex}\leb((g\scontr_{\om}\sset)_{e^{-t}}\cap\map\scontr_{\nu}\opens)\textup{d}t\nonumber\\
  \leq\ & \sum_{k=0}^{\lfloor\arith^{-1}(T-L)-1\rfloor}\hspace{-3ex}\sum_{\om\in\codes_{\nu}(e^{-T+(k+1)\arith},\de)}\hspace{-1.5ex}\gommin(e^{-T+(k+1)\arith})^{\edim}(1+\de)^{\edim}\sratio_{\om}^{\edim}\nonumber\\
  &\qquad\qquad\qquad	\cdot\int_{T-(k+1)\arith}^{T-k\arith}(e^{-t})^{\hdim-\edim}\leb(\sset_{e^{-t}/(\gommin(e^{-T+(k+1)\arith})\sratio_{\om})})\textup{d}t\nonumber\\
  =\ &\sum_{k=0}^{\lfloor\arith^{-1}(T-L)-1\rfloor}\hspace{-3ex}\sum_{\om\in\codes_{\nu}(e^{-T+(k+1)\arith},\de)}\hspace{-1.5ex} \gommin(e^{-T+(k+1)\arith})^{\hdim}(1+\de)^{\edim}\sratio_{\om}^{\hdim}\nonumber\\
  &\qquad\qquad\qquad	\cdot\int_{T-(k+1)\arith+\ln(\gommin(e^{-T+(k+1)\arith})\sratio_{\om})}^{T-k\arith+\ln(\gommin(e^{-T+(k+1)\arith})\sratio_{\om})}(e^{-t})^{\hdim-\edim}\leb(\sset_{e^{-t}})\textup{d}t.\label{eq:UTgpO}
\end{align}
Now observe that $T-\lfloor\arith^{-1}(T-L)\rfloor\arith+\ln(\gommin(e^{-T+(k+1)\arith})\sratio_{\om})\geq L+\ln(\low\underline{\sratio})\geq M$. Thus, we can apply \cref{ftheta} to obtain
\begin{align*} 
  &U(T,\map\scontr_{\nu}\opens)\\
	\leq\ & \sum_{k=0}^{\lfloor\arith^{-1}(T-L)-1\rfloor}\hspace{-4.5ex}\sum_{\om\in\codes_{\nu}(e^{-T+(k+1)\arith},\de)}\hspace{-2.5ex}\gommin(e^{-T+(k+1)\arith})^{\hdim}(1+\de)^{\edim}\sratio_{\om}^{\hdim}	\left(\int_{T-(k+1)\arith}^{T-k\arith}f(y)\textup{d}y+\de\right)\\
  \leq\ & \sum_{k=0}^{\lfloor\arith^{-1}(T-L)-1\rfloor}\sum_{\om\in\codes_{\nu}(0,\de)}\gommin(0)^{\hdim}(1+\de)^{\edim+\hdim}\sratio_{\om}^{\hdim}\left(\int_{T-(k+1)\arith}^{T-k\arith}f(y)\textup{d}y+\de\right) \\
  =\ &\sum_{\om\in\codes_{\nu}(0,\de)}\gommin(0)^{\hdim}(1+\de)^{\edim+\hdim}\sratio_{\om}^{\hdim}\left(\int_{T-\lfloor\arith^{-1}(T-L)\rfloor\arith}^{T}f(y)\textup{d}y+\de\right).
\end{align*}
We know, in light of Theorem \ref{thm:Gatzouras}, that the average Minkowski content of the self-similar set $\sset$ exists. In view of Equation (\ref{Xoben}), the upper estimate for $U(T,\map\scontr_{\nu}\opens)$ implies that
\begin{equation}\label{eq:Xover}
  \overline{X}(\map\scontr_{\nu}\opens)\leq(\tilde{\Mink}(\sset)+2\de)\sum_{\om\in\codes_{\nu}(0,\de)}\gommin(0)^{\hdim}(1+\de)^{\edim+\hdim}\sratio_{\om}^{\hdim}.
\end{equation}
Now, we focus on finding a lower bound. In analogy to \cref{eq:UTgpO} we have
\begin{align}
 U(T,\map\scontr_{\nu}\opens)
  \geq&\hspace{-1ex} \sum_{k=0}^{\lfloor\arith^{-1}(T-L)-1\rfloor}\hspace{-3ex}\sum_{\om\in\codes_{\nu}(e^{-T+(k+1)\arith},\de)}\hspace{-1.5ex}\gommin(e^{-T+(k+1)\arith})^{\edim}\label{Ulower}\\
	&\quad\cdot\int_{T-(k+1)\arith}^{T-k\arith}\hspace{-1.5ex}(e^{-t})^{\hdim-\edim}\leb((\scontr_{\om}\sset)_{e^{-t}/(\gommin(e^{-T+(k+1)\arith})(1+\de))}\cap\scontr_{\nu}O)\textup{d}t.\nonumber
	\end{align}
For $r>0$ we denote the \emph{inner $r$-parallel neighbourhood} of a set $A\subseteq\mathbb R^{\edim}$ by
\[
A_{-r}\defeq\{x\in A\mid\met(x,\partial A)>r\}
\]
and observe that $\leb(Y\cap U)\geq\leb(Y\cap U_{-r})\geq\leb(Y)-\leb(Y\cap (U^c)_r)$ for $Y,U\subseteq\mathbb R^{\edim}$, $U$ open and $r>0$.
Using Lemma \ref{lem:inner} with the constants $\conste,\constz,\constd$ fixed therein and that $T-(k+1)\arith\geq L\geq-\ln\constd\low\underline{\sratio}$ for all $k\in\{0,\ldots,\lfloor\arith^{-1}(T-L)-1\rfloor\}$ we obtain the following for all $\om\in\codes_{\nu}(e^{-T+(k+1)\arith},\de)$ and $t\in(T-(k+1)\arith,T-k\arith]$. To shorten the notation, we write $\mathfrak{G}\defeq\gommin(e^{-T+(k+1)\arith})(1+\de)$.
\begin{align*}
  &\leb((\scontr_{\om}\sset)_{e^{-t}/(\gommin(e^{-T+(k+1)\arith})(1+\de))}\cap\scontr_{\nu}O)\\
  \geq\ & \underbrace{\leb((\scontr_{\om}\sset)_{e^{-t}/(\gommin(e^{-T+(k+1)\arith})(1+\de))})}_{\eqdef A_1(t,\om,k)\eqdef A_1}
  -\leb((\scontr_{\om}\sset)_{e^{-t}/\mathfrak{G}}\cap(\scontr_{\nu}O^c)_{\ee^{-T+(k+1)\arith}/\mathfrak{G}})\\
	\geq\ & A_1 -\leb((\scontr_{\om}\sset)_{e^{-t}/\mathfrak{G}}\cap(\scontr_{\om}O^c)_{\ee^{-T+(k+1)\arith}/\mathfrak{G}})\\
	\geq\ & A_1 -\sratio_{\om}^{\edim}\leb(\sset_{e^{-t}/(\mathfrak{G}\sratio_{\om})}\cap O^c_{\ee^{-T+(k+1)\arith}/(\mathfrak{G}\sratio_{\om})})\\
	\geq\ & A_1 -\underbrace{\sratio_{\om}^{\edim}\conste\left(\frac{\ee^{-t}}{\gommin(e^{-T+(k+1)\arith})(1+\de)\sratio_{\om}}\right)^{\edim-\hdim}
\left(\frac{\ee^{-T+(k+1)\arith}}{\gommin(e^{-T+(k+1)\arith})(1+\de)\sratio_{\om}}\right)^{\constz}}_{\eqdef A_2(t,\om,k)}.
\end{align*}
For $i\in\{1,2\}$ define 
\begin{align*}
	B_i
	\defeq\sum_{k=0}^{\lfloor\arith^{-1}(T-L)-1\rfloor}\hspace{-4.5ex}\sum_{\om\in\codes_{\nu}(e^{-T+(k+1)\arith},\de)}\hspace{-2ex}\gommin(e^{-T+(k+1)\arith})^{\edim}\int_{T-(k+1)\arith}^{T-k\arith}(e^{-t})^{\hdim-\edim} A_i(t,\om,k)\textup{d}t
\end{align*}
and note that $U(T,\map\scontr_{\nu}\opens)\geq B_1-B_2$ holds by \cref{Ulower}. Further, recall that $T-\lfloor\arith^{-1}(T-L)\rfloor\arith+\ln(\gommin(\ee^{-T+(k+1)\arith})\sratio_{\om})\geq M$. Thus, for $i=1$ we have
\begin{align*}
  B_1
  &= \sum_{k=0}^{\lfloor\arith^{-1}(T-L)-1\rfloor}\hspace{-3.5ex}\sum_{\om\in\codes_{\nu}(e^{-T+(k+1)\arith},\de)}\hspace{-1.5ex}\gommin(e^{-T+(k+1)\arith})^{\edim}\sratio_{\om}^{\edim}\\
	&\qquad\qquad\cdot \int_{T-(k+1)\arith}^{T-k\arith}(e^{-t})^{\hdim-\edim}\leb(\sset_{e^{-t}/(\gommin(e^{-T+(k+1)\arith})\sratio_{\om}(1+\de))})\textup{d}t\\
  &=\sum_{k=0}^{\lfloor\arith^{-1}(T-L)-1\rfloor}\hspace{-3.5ex}\sum_{\om\in\codes_{\nu}(e^{-T+(k+1)\arith},\de)}\hspace{-1.5ex} \gommin(e^{-T+(k+1)\arith})^{\hdim}(1+\de)^{\hdim-\edim}\sratio_{\om}^{\hdim}\\
  &\qquad\qquad\cdot\int_{T-(k+1)\arith+\ln(\gommin(e^{-T+(k+1)\arith})\sratio_{\om}(1+\de))}^{T-k\arith+\ln(\gommin(e^{-T+(k+1)\arith})\sratio_{\om}(1+\de))}(e^{-t})^{\hdim-\edim}\leb(\sset_{e^{-t}})\textup{d}t \\
  &\stackrel{(\ref{ftheta})}{\geq}\sum_{k=0}^{\lfloor\arith^{-1}(T-L)-1\rfloor}\hspace{-3.5ex}\sum_{\om\in\codes_{\nu}(e^{-T+(k+1)\arith},\de)}\hspace{-1.5ex}\gommin(e^{-T+(k+1)\arith})^{\hdim}(1+\de)^{\hdim-\edim}\sratio_{\om}^{\hdim}\\
	&\qquad\qquad\cdot\left(\int_{T-(k+1)\arith}^{T-k\arith}f(y)\textup{d}y-\de\right)\\
  &\geq \sum_{k=0}^{\lfloor\arith^{-1}(T-L)-1\rfloor}\hspace{-3.5ex}\sum_{\om\in\codes_{\nu}(0,\de)}\hspace{-1.5ex}\gommin(0)^{\hdim}(1+\de)^{-\edim}\sratio_{\om}^{\hdim}\left(\int_{T-(k+1)\arith}^{T-k\arith}f(y)\textup{d}y-\de\right) \\
  &=\sum_{\om\in\codes_{\nu}(0,\de)}\gommin(0)^{\hdim}(1+\de)^{-\edim}\sratio_{\om}^{\hdim}\left(\int_{T-\lfloor\arith^{-1}(T-L)\rfloor\arith}^{T}f(y)\textup{d}y-\de\right).
\end{align*}
Setting $\upp\defeq\max_{x\in\sset_{1/2}}\lvert\map'(x)\rvert$, for $i=2$ we obtain that
\begin{align*}
	B_2
	&= \sum_{k=0}^{\lfloor\arith^{-1}(T-L)-1\rfloor}\hspace{-3.5ex}\sum_{\om\in\codes_{\nu}(e^{-T+(k+1)\arith},\de)}\hspace{-1.5ex}\gommin(e^{-T+(k+1)\arith})^{\hdim-\constz}\sratio_{\om}^{\hdim-\constz}\ee^{(-T+(k+1)\arith)\constz}\\
	&\qquad\qquad\cdot \underbrace{\conste(1+\de)^{\hdim-\edim-\constz}\arith}_{\eqdef\tilde{\tilde c}}\\
	&\leq \underbrace{\tilde{\tilde c}\cdot\upp^{\hdim}\low^{-\constz}}_{\eqdef\tilde c}\hspace{-1.5ex}\sum_{k=0}^{\lfloor\arith^{-1}(T-L)-1\rfloor}\hspace{-4.5ex}\sum_{\om\in\codes_{\nu}(e^{-T+(k+1)\arith},\de)}\hspace{-3ex}\sratio_{\om}^{\hdim}\cdot(\sratio_{\min}b_{\ee^{-T+(k+1)\arith},\de})^{-\constz}\cdot\ee^{(-T+(k+1)\arith)\constz}\\
	&\leq\tilde{c}\sratio_{\min}^{-\constz}\sum_{k=0}^{\lfloor\arith^{-1}(T-L)-1\rfloor}b_{\ee^{-L},\de}^{-\constz}\cdot\ee^{(-T+(k+1)\arith)\constz}\\
	&\leq\tilde{c}\sratio_{\min}^{-\constz}\cdot b_{\ee^{-L},\de}^{-\constz}\cdot\frac{\ee^{\constz(T-L)}-1}{1-\ee^{-\arith\constz}}\cdot\ee^{-T\constz}.
\end{align*}
Since by \cref{Ulower} we have that $U(T,\map\scontr_{\nu}\opens)\geq B_1-B_2$, it follows that 
\begin{align*}
	U(T,\map\scontr_{\nu}\opens)
	\geq&\sum_{\om\in\codes_{\nu}(0,\de)}\gommin(0)^{\hdim}(1+\de)^{-\edim}\sratio_{\om}^{\hdim}\left(\int_{T-\lfloor\arith^{-1}(T-L)\rfloor\arith}^{T}f(y)\textup{d}y-\de\right)\\
	&\qquad -\tilde{c}\sratio_{\min}^{-\constz}\cdot b_{\ee^{-L},\de}^{-\constz}\cdot\frac{\ee^{\constz(T-L)}-1}{1-\ee^{-\arith\constz}}\cdot\ee^{-T\constz}.
\end{align*}
By Theorem \ref{thm:Gatzouras} the average Minkowski content of the self-similar set $\sset$ exists and in view of Equation (\ref{Xunten}), the lower estimate for $U(T,\map\scontr_{\nu}\opens)$ implies that 
\begin{equation}\label{eq:Xunder}
  \underline{X}(\map\scontr_{\nu}\opens)\geq (\tilde{\Mink}(\sset)-2\de)\sum_{\om\in\codes_{\nu}(0,\de)}\gommin(0)^{\hdim}(1+\de)^{-\edim}\sratio_{\om}^{\hdim}.
\end{equation}
Since Equations (\ref{eq:Xover}) and (\ref{eq:Xunder}) hold for all $\de>0$, taking the limit as $\de$ tends to 0 implies
\begin{equation*}
  \overline{X}(\map\scontr_{\nu}\opens)
  \leq\tilde{\Mink}(\sset) \int_{\scontr_{\nu}\sset}\lvert\map'\rvert^{\hdim}\textup{d}\mu_{\hdim}
  \leq \underline{X}(\map\scontr_{\nu}\opens).
\end{equation*}
Hence, by Theorem \ref{thm:Winter} we have
\[
\overline{X}(\map\scontr_{\nu}\opens)
=\underline{X}(\map\scontr_{\nu}\opens)
=\int_{\map\scontr_{\nu}\sset}\lvert\map'\circ\map^{-1}\rvert^{\hdim}\textup{d}\left(\map_{\star}\tilde{\Mink}(\sset,\cdot)\right)
=\mu(\map\scontr_{\nu}\opens),
\]
where the last equality holds since the normalised $\hdim$-dimensional Hausdorff measure $\mathcal H_K$ on $\sset$ satisfies $\mathcal H_K(\scontr_{\nu}\opens)=\mathcal H_K(\scontr_{\nu}\sset)$ for all $\nu\in\ABC^*$.\\

\textsc{Case 2}: $A\in\mathcal{K}_{\cset}$.\\
In this case there exists an $n\in\mathbb N$ such that $A\subseteq\mathbb R^{\edim}\setminus\bigcup_{\om\in\ABC^n}\map\scontr_{\om}\opens$ and $A\in\Borel$. Setting $\upp\defeq\sup_{x\in\sset_{1/2}}\lvert\map'(x)\rvert$ as before, we have for such a set $A$ that
\begin{align*} 
  U(T,A)
  &= \sum_{k=0}^{\lfloor\arith^{-1}(T-L)-1\rfloor}\int_{T-(k+1)\arith}^{T-k\arith}(e^{-t})^{\hdim-\edim}\leb((g\sset)_{e^{-t}}\cap A)\textup{d}t\\
  &\leq \sum_{k=0}^{\lfloor\arith^{-1}(T-L)-1\rfloor}\int_{T-(k+1)\arith}^{T-k\arith}(e^{-t})^{\hdim-\edim}\upp^{\edim}\leb(\sset_{e^{-t}/\low}\cap\map^{-1} A)\textup{d}t\\
  &\leq \upp^{\edim}\int_{0}^{T}(e^{-t})^{\hdim-\edim}\leb(\sset_{e^{-t}/\low}\cap\map^{-1} A)\textup{d}t.
\end{align*}
Note that Equation (6.2.15) of \cite{Winter_thesis} states that 
\[
\lim_{T\to\infty}T^{-1}\int_0^T(\ee^{-t})^{\hdim-\edim}\leb(\sset_{\ee^{-t}}\cap C)\textup{d}t=0
\]
for every $C\in\mathcal{K}_{\sset}$. Hence, since $\map^{-1}A\in\mathcal{K}_{\sset}$,  we obtain via \cref{Xoben,Xunten} that
\[
\underline{X}(A)=\overline{X}(A)=0=\mu(A).
\]
\end{proof}

\end{document}